\newtheorem{theorem}{Theorem}%[section]
\newtheorem{proposition}{Proposition}%[section]
\newtheorem{lemma}{Lemma}%[section]
\newtheorem{definition}{Definition}%[section]
\newtheorem{example}{Example}%[section]
\newcommand{\R}{\mathbb R}
\newcommand{\RR}{\overline{\mathbb R}}
\newcommand{\E}{\mathbb R^s}
\newcommand{\pr}{\prime}
\newcommand{\eps}{\varepsilon}
\newcommand{\scalpr}[2]{\langle#1,#2\rangle}
\begin{document}

\title{Optimality Conditions in Vector Optimization}

\author{Vsevolod I. Ivanov \thanks{Department of Mathematics, Technical University of Varna, Bulgaria.
E-mail: vsevolod.ivanov@tu-varna.bg}
}

\maketitle

\begin{abstract}
In this paper we obtain second- and first-order optimality conditions of Kuhn-Tucker type and Fritz John one for weak efficiency
in the vector problem with inequality constraints. In the necessary conditions we suppose that the objective function and the
active constraints are continuously differentiable. We introduce notions of KTSP-invex problem  and second-order KTSP-invex one. We obtain that the vector problem is (second-order) KTSP-invex if and only if for every triple $(\bar x,\bar\lambda,\bar\mu)$ with Lagrange multipliers $\bar\lambda$ and $\bar\mu$ for the objective function and constraints respectively, which satisfies the (second-order) necessary optimality conditions, the pair $(\bar x,\bar\mu)$ is a saddle point of the scalar Lagrange function with a fixed multiplier $\bar\lambda$. We introduce notions second-order KT-pseudoinvex-I,  second-order KT-pseudo\-invex-II,  second-order KT-invex problems. We prove that every second-order Kuhn-Tucker stationary point is a weak  global  Pareto minimizer  (global  Pareto minimizer) if and only if the problem is second-order KT-pseudoinvex-I (KT-pseudoinvex-II). 
It is derived that every second-order Kuhn Tucker stationary point is a global solution of the weighting problem if and only if the vector problem is second-order KT-invex. 
\end{abstract}

\noindent
{\bf Key words:} nondifferentiable multiobjective programming; second-order Karush-Kuhn-Tucker conditions; invex functions; saddle points of Lagrange function;  weighting problem; alternative theorem.

\noindent
{\bf MSC 2010:} 90C26, 90C29, 90C46

\section{Introduction}
\setcounter{equation}{0}
\label{s1}

Kuhn-Tucker  optimality conditions and Fritz John ones  for scalar and vector nonsmooth problems with inequality constraints are among the most important directions of investigation in optimization. %We mention several works where second-order conditions have been studied: \cite{ctr03,gz96,gfr06,ggr05,JMAA2008,gl03,gut10,husn84,hua94,JMAA2009,OptLett-1,jw99,luc02,mae04,rtr89,stu91}

In the present paper we deal with optimality conditions of Kuhn-Tucker type for the following problem:
\medskip
%\noindent

Minimize$\quad f(x)\quad$subject to$\quad x\in X,\;
g(x)\leqq0$,\hfill  (P)
\medskip

\noindent
where $f:X\to\R^n$ and $g:X\to\R^m$ are given vector real-valued functions, defined on an open set $X\subset\R^s$. Denote by $S$ the set of feasible points, that is $S:=\{x\in X\mid g(x)\leqq 0\}$. All results given here are obtained for nonsmooth problems in terms of the usual second-order directional derivative. 

For every vector $\lambda$ from the set 
\[
\Lambda:=\{\lambda=(\lambda_1,\lambda_2,\ldots,\lambda_n)\in\R^n\mid\sum_{i=1}^n\lambda_i=1,\;\lambda_i\geqq 0,\; i=1,2,\ldots, n\}
\]
we consider the following weighting scalar problem:
\medskip

%\noindent
Minimize$\quad\scalpr{\lambda}{f(x)}\quad$subject to$\quad x\in X,\;
g(x)\leqq0$.\hfill ${\rm (P_\lambda)}$
\medskip

\noindent
Here we denote by $\scalpr{a}{b}$ the scalar product of the vectors $a$ and $b$.
For every $\lambda\in\Lambda$ we define the scalar Lagrange function
\[
L_\lambda(x,\mu)=\scalpr{\lambda}{f(x)}+\scalpr{\mu}{g(x)}.
\]
For all vectors $\lambda\in\Lambda$ and $\mu\geqq 0$ we consider the following unconstrained problem
\medskip

%\noindent
Minimize$\quad\scalpr{\lambda}{f(x)}+\scalpr{\mu}{g(x)}\quad$subject to$\quad x\in X$. \hfill ${\rm (P_{\lambda,\mu})}$
\medskip

These problems are formulated and discussed by Geoffrion \cite{geo68}.

It is easy to prove the following claims:

\begin{proposition}
Let $\bar x\in S$ be a global solution of ${\rm (P_\lambda)}$ for some $\lambda\in\Lambda$. Then $\bar x$ is a weakly efficient solution of {\rm (P)}.
\end{proposition}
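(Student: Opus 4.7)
The plan is to argue by contradiction, exploiting the fact that the weights $\lambda_i$ are nonnegative and sum to one, so at least one of them must be strictly positive. This positivity is exactly what converts componentwise strict inequalities into a strict inequality on the weighted sum.

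First I would assume, for contradiction, that $\bar x$ is not weakly efficient for (P). By definition this gives a feasible point $x \in S$ with $f_i(x) < f_i(\bar x)$ for every $i = 1, 2, \ldots, n$. Next I would multiply each of these strict inequalities by $\lambda_i \geqq 0$ and sum over $i$. Because $\sum_{i=1}^n \lambda_i = 1$, the vector $\lambda$ cannot be zero, so there is at least one index $i_0$ with $\lambda_{i_0} > 0$; for that index the multiplication preserves strictness, while the other terms contribute nonstrict inequalities. Adding them yields
\[
\scalpr{\lambda}{f(x)} = \sum_{i=1}^n \lambda_i f_i(x) < \sum_{i=1}^n \lambda_i f_i(\bar x) = \scalpr{\lambda}{f(\bar x)}.
\]

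Since $x \in S$, this contradicts the assumption that $\bar x$ is a global solution of $(\mathrm{P}_\lambda)$, finishing the argument. There is no real obstacle here: the only subtlety worth flagging explicitly is that $\lambda \neq 0$ (which follows from $\sum \lambda_i = 1$), because without at least one strictly positive component the passage from the componentwise strict inequalities to the scalar strict inequality would fail.
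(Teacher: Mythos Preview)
Your argument is correct and is exactly the standard one-line proof of this classical fact. The paper itself does not give a proof of this proposition (it merely states that the claim is easy), so there is nothing to compare against; your contradiction argument, hinging on the observation that $\sum_i\lambda_i=1$ forces some $\lambda_{i_0}>0$ and hence preserves strictness in the weighted sum, is precisely what is intended.
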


\begin{proposition}
Let $\bar x\in S$ be a global minimizer of ${\rm (P_{\lambda,\mu})}$ for some $\lambda\in\Lambda$ and $\mu\geqq 0$ with $\mu_j g_j(\bar x)=0$ for all $j=1,2,\dots,m$. Then $\bar x$ is a global solution solution of ${\rm (P_\lambda)}$.
\end{proposition}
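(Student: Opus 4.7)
The plan is a direct chain of inequalities exploiting the sign conditions on $\mu$ and $g$ together with complementary slackness. The proof is routine; there is no real obstacle.

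First I would restrict attention to an arbitrary feasible point $x\in S$. Since $g(x)\leqq 0$ and $\mu\geqq 0$ componentwise, the scalar product satisfies $\scalpr{\mu}{g(x)}\leqq 0$. At the candidate point $\bar x$, the complementary slackness hypothesis $\mu_j g_j(\bar x)=0$ for every $j$ yields $\scalpr{\mu}{g(\bar x)}=\sum_{j=1}^m \mu_j g_j(\bar x)=0$.

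Next, since $\bar x$ is a global minimizer of the unconstrained problem $\mathrm{(P_{\lambda,\mu})}$ on $X$ and $x\in S\subset X$, we have
\[
\scalpr{\lambda}{f(\bar x)}+\scalpr{\mu}{g(\bar x)}\leqq \scalpr{\lambda}{f(x)}+\scalpr{\mu}{g(x)}.
\]
Substituting the two observations above on the left and right sides respectively collapses this to $\scalpr{\lambda}{f(\bar x)}\leqq \scalpr{\lambda}{f(x)}$, valid for every $x\in S$. Since $\bar x$ itself lies in $S$ by assumption, this is exactly the statement that $\bar x$ is a global solution of $\mathrm{(P_\lambda)}$, completing the argument.

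The only point that requires any care is ensuring that $\bar x\in S$ is used explicitly so that the conclusion pertains to the constrained problem $\mathrm{(P_\lambda)}$ rather than to the unconstrained weighting functional; otherwise the proof is a one-line penalty-function argument and presents no genuine difficulty.
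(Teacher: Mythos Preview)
Your proof is correct. The paper does not actually supply a proof of this proposition; it merely states that ``it is easy to prove the following claims'' and lists Propositions~1--4 without argument. Your chain of inequalities using $\scalpr{\mu}{g(x)}\leqq 0$ for feasible $x$, $\scalpr{\mu}{g(\bar x)}=0$ from complementary slackness, and the optimality of $\bar x$ for $\mathrm{(P_{\lambda,\mu})}$ is the standard argument and is exactly what the author has in mind.
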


\begin{proposition}\label{pr3}
Let $\bar x\in S$ be a global minimizer of ${\rm (P_{\lambda,\bar\mu})}$ for some $\lambda\in\Lambda$ and $\bar\mu\geqq 0$ with $\bar\mu_j g_j(\bar x)=0$. Then the pair $(\bar x,\bar\mu)$ is a Kuhn-Tucker saddle point of ${\rm L_\lambda}$.
\end{proposition}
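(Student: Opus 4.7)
The plan is to verify directly the two saddle-point inequalities that define a Kuhn--Tucker saddle point of $L_\lambda$, namely
\[
L_\lambda(\bar x,\mu)\leqq L_\lambda(\bar x,\bar\mu)\leqq L_\lambda(x,\bar\mu)\quad\text{for all }x\in X\text{ and all }\mu\geqq 0.
\]
Each inequality is immediate from one of the hypotheses, so no substantial argument is needed; I expect the whole proof to be a few lines.

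First I would handle the right inequality. By assumption $\bar x$ is a global minimizer of $\rc{(P_{\lambda,\bar\mu})}$ on $X$, and the objective of that problem is precisely $L_\lambda(\cdot,\bar\mu)$. Hence $L_\lambda(\bar x,\bar\mu)\leqq L_\lambda(x,\bar\mu)$ for every $x\in X$, which is the desired inequality.

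Next I would handle the left inequality. I would use the complementary slackness $\bar\mu_j g_j(\bar x)=0$ for $j=1,\dots,m$ to simplify
\[
L_\lambda(\bar x,\bar\mu)=\scalpr{\lambda}{f(\bar x)}+\sum_{j=1}^m\bar\mu_j g_j(\bar x)=\scalpr{\lambda}{f(\bar x)}.
\]
Then for an arbitrary $\mu\geqq 0$, the inequality $L_\lambda(\bar x,\mu)\leqq L_\lambda(\bar x,\bar\mu)$ reduces to $\scalpr{\mu}{g(\bar x)}\leqq 0$. This holds because $\bar x\in S$ gives $g(\bar x)\leqq 0$ and $\mu\geqq 0$, so the inner product of a nonnegative vector with a nonpositive vector is nonpositive.

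The main obstacle is essentially only bookkeeping: making sure that the complementary slackness condition is used in the correct place (to cancel the $\bar\mu$-term, not the $\mu$-term) and that feasibility $g(\bar x)\leqq 0$ is invoked to dispose of the remaining $\scalpr{\mu}{g(\bar x)}$ term. Once these two observations are lined up, the conclusion that $(\bar x,\bar\mu)$ is a Kuhn--Tucker saddle point of $L_\lambda$ follows from the very definition.
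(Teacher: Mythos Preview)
Your argument is correct and is exactly the standard two-line verification of the saddle-point inequalities from global optimality plus feasibility and complementary slackness. The paper itself does not supply a proof, stating only that the claim is easy; your write-up fills this in precisely as one would expect.
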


\begin{proposition}\label{pr4}
Let the pair $(\bar x,\bar\mu)\in S\times [0,+\infty)^m$ be a Kuhn-Tucker saddle point of ${\rm L_\lambda}$ for some $\lambda\in\Lambda$. Then
$\bar x\in S$ is a global minimizer of ${\rm (P_{\lambda,\bar\mu})}$  and $\bar\mu_j g_j(\bar x)=0$. 
\end{proposition}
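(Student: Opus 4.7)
The plan is to unpack the definition of a Kuhn-Tucker saddle point and read off both conclusions directly. By definition, the saddle point condition on $L_\lambda$ means that the pair $(\bar x,\bar\mu)\in X\times[0,+\infty)^m$ satisfies
\[
L_\lambda(\bar x,\mu)\le L_\lambda(\bar x,\bar\mu)\le L_\lambda(x,\bar\mu)\quad\text{for all }x\in X\text{ and all }\mu\geqq 0.
\]
The right-hand inequality is, after writing out $L_\lambda$, exactly the statement that $\bar x$ is a global minimizer on $X$ of the function $x\mapsto\scalpr{\lambda}{f(x)}+\scalpr{\bar\mu}{g(x)}$, which is the objective of ${\rm (P_{\lambda,\bar\mu})}$. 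So the first conclusion follows with no further work.

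For the complementarity relations $\bar\mu_j g_j(\bar x)=0$, I would exploit the left-hand inequality. Canceling the common term $\scalpr{\lambda}{f(\bar x)}$ on both sides reduces it to $\scalpr{\mu}{g(\bar x)}\le\scalpr{\bar\mu}{g(\bar x)}$ for every $\mu\geqq 0$. Setting $\mu=0$ yields $\scalpr{\bar\mu}{g(\bar x)}\geqq 0$, while the feasibility $\bar x\in S$ together with $\bar\mu\geqq 0$ forces $\scalpr{\bar\mu}{g(\bar x)}=\sum_j\bar\mu_j g_j(\bar x)\leqq 0$. Hence $\sum_j\bar\mu_j g_j(\bar x)=0$, and because each summand is nonpositive, every summand must vanish, giving $\bar\mu_j g_j(\bar x)=0$ for all $j=1,2,\dots,m$.

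There is essentially no obstacle here: the argument is purely a manipulation of the saddle point inequalities and the sign conventions on $\bar\mu$ and $g(\bar x)$, and does not require any differentiability or convexity hypotheses. The only point that deserves a line of care is justifying the step from $\sum_j\bar\mu_j g_j(\bar x)=0$ to the termwise vanishing, which rests on the observation that a sum of nonpositive numbers equals zero only when each summand is zero.
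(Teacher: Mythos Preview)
Your argument is correct and is the standard one. The paper itself does not supply a proof of this proposition (it merely says ``It is easy to prove the following claims''), so there is nothing to compare against; your write-up is exactly the routine verification the author has in mind.
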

Denote the solution sets of (P), (P$_\lambda$) and (P$_{\lambda,\mu}$) respectively by WE$\,$(P), Argmin$\,$(P$_\lambda$) and Argmin$\,$(P$_{\lambda,\mu}$), and the set of points which satisfy Kuhn-Tucker necessary optimality conditions by KT$\,$(P). We have the following implications under additional assumptions:
\[
\bar x\in {\rm Argmin\,(P_{\lambda,\mu})}\;\Rightarrow\; \bar x\in {\rm  Argmin\,(P_\lambda)}\;\Rightarrow\;\bar x\in {\rm WE\,(P)} \;\Rightarrow\; \bar x\in {\rm KT\,(P)}.
\]
It is interesting when the following converse conditions are satisfied:

1) Every Kuhn-Tucker stationary point is a saddle point of the Lagrange function.

2) Every Kuhn-Tucker stationary point is a  weak global Pareto minimizer.

3) Every Kuhn-Tucker stationary point is a solution of the weighting problem. 

We answer the first question in the first- and second-order cases. 
The answers of the second question and the third one are known. \cite{ara08,osu99,osu98} In this paper we find the largest class of functions such that every second-order Kuhn-Tucker stationary point is weakly efficient, and when it is a solution of weighting problem. 

We define the following new classes of vector problems: KTSP-invex, second-order KTSP-invex, second-order KT-pseudoinvex I, second-order KT-pseudoinvex II, second-order KT-invex. We prove that in contrast of the second-order generalized invex vector problems, applied in Duality Theory, the classes of second-order generalized invex vector problems, introduced in the paper, include the respective class of first-order generalized invex vector problem.

In Section \ref{s2} we prove a theorem of the alternative, which we apply later several times. In Section \ref{s3}
we derive second-order necessary conditions for a local minimum of problems with C$^1$ (i.e. continuously differentiable) data. 

The necessary conditions are not enough to determine the minimizer. In the next sections we show that the same conditions without constraint qualification are sufficient for a global minimum in some problems of generalized convexity type. In Section \ref{s6} we introduce notions of KTSP-invex and second-order KTSP-invex problems with inequality constraints. We prove that these classes are the largest ones with the following properties: every (second-order) Kuhn-Tucker stationary point is saddle for the respective scalar Lagrange function. In other words we find the largest classes of problems (P) such that the saddle points of the Lagrange function and the (second-order) Kuhn-Tucker stationary points coincide with the weakly efficient solution of (P). 
In Section \ref{s4} we define a notion of a second-order KT-pseudoinvex-I vector problem with inequality constraints. We obtain that these problems are the largest class such that every second-order Kuhn-Tucker stationary point is a weak global Pareto minimizer of (P). At last, in Section \ref{s5} we define a notion of a second-order invex vector problem with inequality constraints. We prove that these problems are the largest class such that every second-order Kuhn-Tucker stationary point is a global solution of the weighting problem. In other words we find the largest class of vector problems with inequality constraints  such that the set of weakly efficient global solutions of (P) coincides with the global solutions of the weighting problem and the second-order Kuhn-Tucker stationary points. 

The notions of invexity that we study in this paper are generalizations of some notions for scalar and vector problems
\cite{ara08,osu99,osu98,mar85,JOGO,Optimization,OptLett-2}.

\section{A Theorem of the Alternative}
\label{s2}\setcounter{equation}{0}
Throughout this paper we use the following notations comparing the vectors $x$ and $y$ with components $x_i$ and $y_i$ in finite-dimensional spaces:
\[
x>y\quad\textrm{iff}\quad x_i>y_i\quad\textrm{for all indeces }i;
\]
\[
x\geqq y\quad\textrm{iff}\quad x_i\geqq y_i\quad\textrm{for all indeces }i;
\]
\[
x\ge y\quad\textrm{iff}\quad x_i\geqq y_i\quad\textrm{for all indeces }i,\; x\ne y.
\]
We denote by $e$ the vector $e=(1,1,\ldots,1)$, and by $A^T$ the transpose of the matrix $A$.

We apply several times the following lemma:

\begin{lemma}\label{th-alternative}
Let $A$, $B$, $C$, and $D$ be given matrices. Then either the system
\begin{equation}\label{7}
A^Tx+C^Tu<0,\quad B^Tx+D^Tu\leqq 0,\quad u\geqq 0
\end{equation}
has a solution $(x,u)$ or the system
\begin{equation}\label{8}
y^TA+z^TB=0,\quad y^TC+z^TD\geqq 0,\quad y\ge 0,\quad z\geqq 0
\end{equation}
has a solution $(y,z)$, but never both.
\end{lemma}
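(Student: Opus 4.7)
The plan is to reduce the lemma to Motzkin's classical theorem of the alternative by encoding the sign constraint $u\geqq 0$ as an additional block of non-strict inequalities on the joint variable $w=(x,u)$.

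First I would dispose of the ``not both'' direction by a direct pairing argument. Suppose $(x,u)$ solves \eqref{7} and $(y,z)$ solves \eqref{8}, and form
\[
S := y^T(A^Tx+C^Tu)+z^T(B^Tx+D^Tu).
\]
Regrouping in $x$ and $u$, the coefficient of $x$ equals (the transpose of) $y^TA+z^TB=0$, and the coefficient of $u$ equals $y^TC+z^TD\geqq 0$; combined with $u\geqq 0$, this forces $S\geqq 0$. On the other hand, $y\ge 0$ together with the strict inequality $A^Tx+C^Tu<0$ yields $y^T(A^Tx+C^Tu)<0$, while $z\geqq 0$ and $B^Tx+D^Tu\leqq 0$ give $z^T(B^Tx+D^Tu)\leqq 0$, so $S<0$, a contradiction.

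For the harder ``at least one holds'' direction, I would recast \eqref{7} as a Motzkin system in the joint variable $w=(x,u)$, namely $\widetilde{A}w<0$ and $\widetilde{B}w\leqq 0$, where $\widetilde A=[A^T\ C^T]$ and $\widetilde B$ is the vertical stack of $[B^T\ D^T]$ and $[0\ -I]$; the bottom block of $\widetilde B$ encodes $u\geqq 0$. If this composite system is inconsistent, the classical Motzkin theorem produces a nonzero $y\ge 0$ and nonnegative $z,v\geqq 0$ satisfying $\widetilde A^T y+\widetilde B^T\binom{z}{v}=0$. Reading off the two blocks of this identity gives $Ay+Bz=0$ and $Cy+Dz=v\geqq 0$, which is precisely \eqref{8}.

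The only real obstacle is the bookkeeping of how the transposes line up when assembling $\widetilde A$ and $\widetilde B$, so that the Motzkin multipliers split cleanly into $(y,z,v)$ with $v$ acting as a slack absorbed into the inequality $y^TC+z^TD\geqq 0$. Once this identification is in place, both directions follow without further computation, and Motzkin's theorem may be treated as the single ``black box'' invoked in the argument.
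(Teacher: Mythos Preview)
Your argument is correct. The pairing computation for ``not both'' is standard, and the reduction of \eqref{7} to a Motzkin system in the stacked variable $w=(x,u)$, with the block $[0\ -I]$ absorbing the sign constraint on $u$, goes through cleanly; the auxiliary multiplier $v$ attached to that block indeed becomes the slack in $y^TC+z^TD\geqq 0$. Your own caveat about transposes is the only place to be careful, and the paper's notation is itself loose on this point, so nothing substantive is at risk.

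The route, however, is genuinely different from the paper's. The paper does not invoke any alternative theorem as input; instead it writes down a primal linear program whose feasibility is exactly the solvability of \eqref{8} (with the normalisation $\langle y,e\rangle=1$) and whose dual has optimal value $>0$ precisely when \eqref{7} is solvable, and then appeals to the LP Duality Theorem for both directions at once. In particular, the paper regards the lemma as a \emph{source} of alternative theorems (it remarks that Motzkin, Farkas and Gordan are all special cases), whereas you treat Motzkin as a black box and show the lemma is equivalent to it. Your approach is shorter and avoids setting up two linear programs, at the cost of importing Motzkin wholesale; the paper's approach is self-contained from LP duality and makes transparent why the constraint $y\ge 0$ (rather than merely $y\geqq 0$) appears, via the normalisation $\sum y_i=1$ in the primal.
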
  
\begin{proof}
Consider the following primal linear programming problem

\medskip
\begin{tabular}{ll}
Minimize & 0         \\
subject to & $y^TA+z^TB=0$  \\
& $y^TC+z^TD\geqq 0$ \\
& $\scalpr{y}{e}=1$ \\
& $y\geqq 0,\quad z\geqq 0$
\end{tabular}
\medskip

\noindent and its dual one

\medskip
\begin{tabular}{lll}
Maximize & $v$ & \\
subject to & $A^Tx+C^Tu+ve$ & $\leqq 0$ \\
& $B^Tx+D^Tu$ & $\leqq 0$  \\
&  $u\geqq 0$. &
\end{tabular}
\medskip

Suppose that the system (\ref{7}) has no solutions. We prove that the system (\ref{8}) is solvable. The dual problem is feasible, because $x=0$, $u=0$, $v=0$ is a feasible point. It follows from the assumption (\ref{7}) is unsolvable that there is no a number $v>0$ and vectors $x$ and $u$ such that the triple
$(x,u,v)$ is feasible for the dual problem. Therefore, the dual problem is solvable and its optimal value is 0. According to Duality Theorem in Linear Programming the primal problem is solvable with the same optimal value. Hence, the system (\ref{8}) is solvable.

Suppose that (\ref{7}) is solvable. We prove that (\ref{8}) has no solutions. Assume the contrary, that is (\ref{8}) is solvable. Therefore, the primal problem is solvable. It follows from Duality Theorem that the dual problem is also solvable and its optimal value is 0. Therefore, there is no a positive number $v$ and vectors $x$, $u$ such that the triple $(x,u,v)$ is feasible for the dual problem, which is a contradiction, because we suppose that (\ref{7}) is solvable.
\end{proof}

Motzkin's, Farkas', and Gordan's theorems of the alternative are particular cases of Lemma \ref{th-alternative} (see \cite{man69}).

\section{Second-Order Necessary Conditions for \\ Weak Local Minimum}
\label{s3}\setcounter{equation}{0}
In this section we derive necessary optimality conditions for the problem
{\rm{(P)}} with continuously differentiable data.

We begin this section with some preliminary definitions.
\smallskip
Denote by $\R$ the set of reals and $\RR=\R\cup\{-\infty\}\cup\{+\infty\}$.
Let the function $h:X\to\R$ with an open domain $X\subset\R^s$ be differentiable at
the point $x\in X$. Then the second-order directional derivative
$h^{\pr\pr}(x,u)$ of $h$ at the point $x\in X$ in direction $u\in\R^n$
is defined as an element of $\RR$ by
\begin{displaymath}
h^{\pr\pr}(x,u)=\lim_{t\to +0}\,2\,t^{-2}\,[h(x+tu)-h(x)-t\nabla h(x)u].
\end{displaymath}
The function $h$ is called second-order directionally differentiable at the point $x$
if the derivative $h^{\pr\pr}(x,u)$ exists for each direction $u\in\R^n$ and $-\infty<h^{\pr\pr}(x,u)<+\infty$. We say that $h^{\pr\pr}(x,u)$ exists, if it is finite.

Consider the problem {\rm{(P)}}. 
For every feasible point $x\in S$  let $A(x)$ be the set of active constraints
\[
A(x):=\{j\in\{1,2,\ldots,m\}\mid g_j(x)=0\}.
\]

\begin{definition}
A direction $d$ is called critical at the point $x\in S$ if
\[
\nabla f_i(x)d\leqq0,\; i=1,2,\ldots,n,\,\quad \nabla g_j(x)d\leqq 0,\;\forall j\in A(x).
\]
For every critical direction $d$ at the feasible point $x$ denote by $I(x,d)$ and $J(x,d)$ the following index sets:
\[
I(x,d):=\{i\in\{1,2,\ldots,n\}\mid\nabla f_i(x)d=0\},
\]
\[
J(x,d):=\{j\in A(x)\mid\nabla g_j(x)d=0\}.
\]
\end{definition}

\begin{definition}
A feasible point $\bar x\in S$ is called a (weak) local Pareto minimizer, or (weakly) efficient if there exists a neigbourhood $U\ni\bar x$ such that there is no $x\in U\cap S$ with  $f(x)\le f(\bar x)$ ($f(x)<f(\bar x)$). 
The point $\bar x\in S$ is called a (weak) global Pareto minimizer if there does not exist $x\in S$ with $f(x)\le f(\bar x)$ ($f(x)<f(\bar x)$).
\end{definition}
The problem (P) is said to be Fr\'echet differentiable if $f$ and $g$ are Fr\'echet differentiable.

The following result, which contains first-order necessary optimality conditions under various constraint qualifications,  is known as Kuhn-Tucker Theorem:
\begin{proposition}
Let the problem {\rm (P)} be Fr\'echet differentiable on some open set $X\subseteq\R^s$. Suppose that $\bar x$ is a weak local Pareto minimizer, and any of the constraint qualifications holds. Then there exist vectors  $\lambda=(\lambda_1,\lambda_2,\ldots,\lambda_n)\ge 0$ and 
$\mu=(\mu_1,\mu_2,\ldots,\mu_m)\geqq 0$ such that
\[
\sum_{i=1}^n\lambda_i\nabla f_i(\bar x)+\sum_{j=1}^m\mu_j\nabla g_j(\bar x)=0,\;\lambda_j\, g_j(\bar x)=0,\; j=1,2,...,m.
%\end{equation}
\]
\end{proposition}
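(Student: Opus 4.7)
The plan is to reduce the weak local Pareto minimality of $\bar x$ to the infeasibility of a certain linearized system, and then obtain the multipliers by applying Lemma \ref{th-alternative}.

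First, I would use the constraint qualification to translate the geometric optimality condition into an algebraic one. Namely, under any of the standard CQs, weak local Pareto minimality of $\bar x$ implies that the system
\[
\nabla f_i(\bar x)\,d<0\quad (i=1,\ldots,n),\qquad \nabla g_j(\bar x)\,d\leqq 0\quad (j\in A(\bar x))
\]
has no solution $d\in\R^s$. The reasoning is the usual one: if such a $d$ existed, the CQ would produce a feasible arc $x(t)\in S$, $t\in[0,\tau)$, emanating from $\bar x$ with tangent $d$, along which each $f_i(x(t))<f_i(\bar x)$ for small $t>0$ by Fr\'echet differentiability, contradicting weak local Pareto minimality.

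Second, I would specialize Lemma \ref{th-alternative} by taking $C=0$ and $D=0$ (equivalently, by restricting to $u=0$). Let $A$ be the $s\times n$ matrix whose $i$-th column is $\nabla f_i(\bar x)^T$ and $B$ the $s\times|A(\bar x)|$ matrix whose columns are the active gradients $\nabla g_j(\bar x)^T$. The infeasibility of the above strict system is precisely the failure of the first alternative in Lemma \ref{th-alternative}. Hence the second alternative holds: there exist $y\ge 0$ and $z\geqq 0$ with $y^TA+z^TB=0$. Setting $\lambda:=y$ and extending $z$ to $\mu\in\R^m$ by $\mu_j:=z_j$ for $j\in A(\bar x)$ and $\mu_j:=0$ for $j\notin A(\bar x)$ produces the desired multipliers: we obtain
\[
\sum_{i=1}^n\lambda_i\nabla f_i(\bar x)+\sum_{j=1}^m\mu_j\nabla g_j(\bar x)=0,
\]
the relation $\lambda\ge 0$ is inherited from $y\ge 0$, and complementary slackness $\mu_j g_j(\bar x)=0$ holds by construction, since $g_j(\bar x)=0$ on $A(\bar x)$ and $\mu_j=0$ off $A(\bar x)$.

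The main obstacle is the first step, the passage from weak local Pareto minimality to algebraic infeasibility of the linearized strict system. This is exactly where the constraint qualification enters, and different CQs (Slater's condition, linear independence, Mangasarian--Fromovitz, Abadie's, or Kuhn--Tucker's regularity) each give a slightly different linearization mechanism; however, all of them deliver the same final conclusion, so the proposition may be stated with a generic "any of the constraint qualifications holds". Once this step is granted, the algebraic part of the proof is a one-line appeal to Lemma \ref{th-alternative}, which is why this result is presented without a detailed proof in the exposition: it is a classical Kuhn--Tucker theorem for the vector case, recalled here only to fix the first-order multiplier notation that will be refined in the second-order necessary conditions of Section~\ref{s3}.
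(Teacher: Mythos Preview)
Your proposal is correct and is exactly the standard route to the vector Kuhn--Tucker theorem. Note, however, that the paper does \emph{not} supply a proof of this proposition at all: it is quoted as a known result (``The following result \ldots\ is known as Kuhn--Tucker Theorem'') and left without argument, serving only to fix the first-order multiplier notation before the second-order analysis. So there is nothing to compare your argument against; you have in fact filled in what the paper omits, and you correctly anticipated this in your final paragraph. Your use of Lemma~\ref{th-alternative} with $C=0$, $D=0$ recovers Motzkin's alternative and yields $\lambda\ge 0$ (hence $\lambda\ne 0$) directly, which is the clean way to do it within the paper's framework.
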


The next two theorems are generalizations of the primal and dual necessary optimality conditions by Ginchev, Ivanov \cite{JMAA2008} where scalar problems with ineuality constraints are treated. 

\begin{theorem}\label{th1}
Let $X$ be an open set in the space $\R^s$, the functions
\[
f_i\; (i=1,2,\ldots,m),\quad g_j\; (j=1,2,\ldots,m)
\]
be defined on $X$. Suppose that $\bar x$ is a weak local Pareto minimizer of the problem {\rm{(P)}},
the functions $g_j$ $(j\notin A(\bar x))$ are continuous at $\bar x$,
the functions $f$ and $g_j$ $(j\in\ A(\bar x))$ are continuously differentiable,
and the functions $f_i$ $(i\in I(\bar x,d))$ and $g_j$ $(j\in J(\bar x,d))$ are second-order directionally
differentiable at $\bar x$ in any critical direction $d\in\R^s$. Then for every critical direction $d\in\E$, 
it follows that there is no $z\in\E$ which solves the system
\begin{equation}\label{1}
\nabla f_i(\bar x)z+f_i^{\pr\pr}(\bar x,d)<0,\quad i\in I(\bar x,d).
\end{equation}
\begin{equation}\label{2}
\nabla g_j(\bar x)z+g_j^{\pr\pr}(\bar x,d)<0,\quad j\in J(\bar x,d).
\end{equation}
\end{theorem}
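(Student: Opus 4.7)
The plan is to argue by contradiction. Suppose that for some critical direction $d\in\E$ there exists $z\in\E$ solving (\ref{1}) and (\ref{2}) simultaneously. I will construct a parabolic arc $x(t):=\bar x+td+(t^2/2)z$ and show that for all sufficiently small $t>0$ one has $x(t)\in S$ together with $f(x(t))<f(\bar x)$, contradicting weak local Pareto minimality of $\bar x$.

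The central computation is a Taylor-type expansion along this arc. For each $i\in I(\bar x,d)$, I split
\[
f_i(x(t))-f_i(\bar x)=\bigl[f_i(x(t))-f_i(\bar x+td)\bigr]+\bigl[f_i(\bar x+td)-f_i(\bar x)\bigr].
\]
The first bracket, by the mean value theorem applied to the $C^1$ function $f_i$ and continuity of $\nabla f_i$ at $\bar x$, equals $(t^2/2)\nabla f_i(\bar x)z+o(t^2)$; the second bracket, by the very definition of the second-order directional derivative and the identity $\nabla f_i(\bar x)d=0$, equals $(t^2/2)f_i^{\pr\pr}(\bar x,d)+o(t^2)$. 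Summing and invoking (\ref{1}) gives $f_i(x(t))<f_i(\bar x)$ for small $t>0$. For the remaining indices $i\notin I(\bar x,d)$, criticality of $d$ forces $\nabla f_i(\bar x)d<0$, and a first-order expansion $f_i(x(t))-f_i(\bar x)=t\nabla f_i(\bar x)d+o(t)$ already yields strict decrease.

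Feasibility is handled in three cases: for $j\notin A(\bar x)$, one has $g_j(\bar x)<0$ and continuity of $g_j$ delivers $g_j(x(t))<0$; for $j\in A(\bar x)\setminus J(\bar x,d)$, criticality gives $\nabla g_j(\bar x)d<0$ and the first-order expansion combined with $g_j(\bar x)=0$ yields $g_j(x(t))<0$; for $j\in J(\bar x,d)$, the same two-term splitting as above together with (\ref{2}) produces $g_j(x(t))<0$. Choosing $t>0$ small enough so that all these strict inequalities hold simultaneously and $x(t)$ lies in the neighbourhood of weak minimality finishes the contradiction.

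The main delicate point I anticipate is justifying the second-order expansion
\[
f_i(x(t))=f_i(\bar x)+\tfrac{t^2}{2}\bigl[\nabla f_i(\bar x)z+f_i^{\pr\pr}(\bar x,d)\bigr]+o(t^2)
\]
when the hypothesis only provides the second-order directional derivative along the direction $d$ itself, not along the perturbed direction $d+(t/2)z$. This is exactly why the splitting at the intermediate point $\bar x+td$ matters: the quadratic perturbation $(t^2/2)z$ is absorbed by a single application of continuity of $\nabla f_i$, and one never needs a second-order derivative in a varying direction. An analogous care is required for the constraints $g_j$ with $j\in J(\bar x,d)$.
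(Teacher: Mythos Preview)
Your proposal is correct and follows essentially the same approach as the paper: a contradiction argument along the parabolic arc $\bar x+td+\tfrac{t^2}{2}z$, with the key second-order expansion obtained by splitting at the intermediate point $\bar x+td$, applying the mean value theorem together with continuity of the gradient to the first bracket, and invoking the definition of $f_i^{\pr\pr}(\bar x,d)$ on the second. Your case analysis for the indices in $I(\bar x,d)$, its complement, and the three constraint cases matches the paper's proof, and your remark about why one never needs a second-order derivative in a varying direction is exactly the point the paper exploits.
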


\begin{proof}
Suppose the contrary that there exists a critical direction $d$ such that
the system (\ref{1}), (\ref{2}) has a solution $z\in\E$. Obviously, the case
$I(\bar x,d)\cup J(\bar x,d)=\emptyset$ is impossible, because $\bar x$ is a weak minimimer.

Let $i\in \{1,2,\ldots,n\}$ be arbitrary fixed. Consider the function of one variable 
\[
\varphi_i(t)=f_i(\bar x+td+0.5t^2z).
\]
Since $X$ is open and $\bar x$ is feasible, there exists $\eps_i>0$ such that $\varphi_i$ is defined for $0\leqq t<\eps_i$. We have
\[
\varphi_i^\pr(t)=\nabla f_i(\bar x+td+0.5t^2z)(d+tz).
\]
Therefore, $\varphi_i^\pr(0)=\nabla f_i(\bar x)d$. Consider the
differential quotient
\[
2\,t^{-2}\,[\varphi_i(t)-\varphi_i(0)-t\varphi^\pr_i(0)]
=2\,t^{-2}\,[f_i(\bar x+td+0.5t^2 z)-f_i(\bar x)-t\nabla f_i(\bar x)d].
\]
According to the Mean-Value Theorem there exists $\theta_i\in(0,1)$ such that
\[
f_i(\bar x+td+0.5t^2 z)=f_i(\bar x+td)+\nabla
f_i(\bar x+td+0.5t^2\theta_i z)(0.5t^2 z).
\]
By $f_i\in\rm{C}^1$, we obtain that there exists the second-order
directional derivative $\varphi_i^{\pr\pr}(0,1)$ and
\[
\nabla f_i(\bar x)z+f_i^{\pr\pr}(\bar x,d)=
\lim_{t\to+0}\nabla f_i(\bar x+td+0.5t^2\theta_i z)z
\]
\[
+\lim_{t\to+0}\;2t^{-2}(f_i(\bar x+td)-f_i(\bar x)-
t\nabla f_i(\bar x)d)=\varphi_i^{\pr\pr}(0,1).
\]
Since $z$ is a solution of the system (\ref{1}) with a direction $d$,
we conclude that for every $i\in \{1,2,\ldots,n\}$
there exists $\varepsilon_i>0$ such that
\[
\varphi_i(t)-\varphi_i(0)-t\varphi^\pr_i(0))<0\quad {\rm for all}\quad t\in(0,\varepsilon_i)
\]
that is
\begin{equation}\label{3}
f_i(\bar x+t d+0.5t^2 z)-f_i(\bar x)-t\nabla f_i(\bar x)d<0
%-t\nabla f_i(\bar x)d<0
\quad\textrm{ for all }\quad t\in(0,\varepsilon_i).
\end{equation}

Consider the function of one variable 
\[
\psi_j(t)=g_j(\bar x+td+0.5t^2z).
\]
Using similar arguments we prove that there exists $\delta_j>0$, $j=1,2,\ldots, m$ with
\begin{equation}\label{5}
g_j(\bar x+t d+0.5t^2 z)-g_j(\bar x)-t\nabla g_j(\bar x)d<0
\quad\textrm{ for all }\quad t\in(0,\delta_j).
\end{equation}

Consider the following cases:

1) For every $j\in\{1,2,...,m\}\setminus A(\bar x)$ we have $g_j(\bar x)<0$.
Hence, by continuity, there exists $\delta_j>0$ such that
$g_j(\bar x+t d+0.5t^2 z)<0$ for all $t\in[0,\delta_j)$.

2) For every $j\in A(\bar x)\setminus J(\bar x,d)$ we have
$\nabla g_j(\bar x)d=
\psi_j^\prime(0)<0$.
Therefore, there exists $\delta_j>0$ such that
$\psi_j(t)<\psi_j(0)$ for all $t\in(0,\delta_j)$. Hence we have
$g_j(\bar x+t d+0.5t^2 z)<g_j(\bar x)=0$ for all $t\in(0,\delta_j)$.

3) For all $j\in J(\bar x,d)$, by $\nabla g_j(\bar x)d=0$,
it follows from (\ref{5}) that there exist $\delta_j>0$ such that
$g_j(\bar x +t d+0.5t^2 z)<g_j(\bar x)=0$ for all $t\in(0,\delta_j)$.

4) For all $i\notin I(\bar x,d)$ we have $\nabla f_i(\bar x)d<0$ that is
$\varphi_i^\pr(0)<0$ and therefore, for some $\varepsilon_i>0$ it holds
$f_i(\bar x+t d+0.5t^2 z)<f_i(\bar x)$ for all $t\in(0,\varepsilon_i)$.

5) For all $i\in I(\bar x,d)$ we have  $\nabla f_i(\bar x)d=0$ and according to (\ref{3})
there exists $\varepsilon_i>0$ such that
$f_i(\bar x +t d+0.5t^2 z)<f_i(\bar x)$ for all $t\in(0,\varepsilon_i)$.

It is seen that $\bar x$ is not a weak local minimizer, contradicting our
hypothesis.
\end{proof}

\begin{theorem}\label{NKT}
If all hypotheses of Theorem \ref{th1} are satisfied, then corresponding
to any critical direction $d$
there exist nonnegative multipliers 
\[
\lambda=(\lambda_1,\lambda_2,\ldots,\lambda_n)\quad\textrm{and}\quad \mu=(\mu_1,\mu_2,\ldots,\mu_m)\quad\textrm{with}\quad (\lambda,\mu)\ne 0
\]
such that
\begin{equation}\label{6}
\mu_j g_j(\bar x)=0,\;j=1,2,\ldots,m,\quad\nabla L(\bar x)=0,
\end{equation}
\begin{equation}\label{4}
\lambda_i\nabla f_i(\bar x)d=0,\quad i=1,2,\ldots,n,\quad\mu_j\nabla g_j(\bar x)d=0,\quad j=1,2,\ldots,m,
\end{equation}
\begin{equation}\label{10}
L^{\pr\pr}(\bar x,d)=\sum_{i=1}^n\lambda_i
f_i^{\pr\pr}(\bar x,d)+ \sum_{j\in A(\bar x)}\mu_j g_j^{\pr\pr}(\bar x,d)\geqq 0,
\end{equation}
where $L=\scalpr{\lambda}{f}+\scalpr{\mu}{g}$ is the Lagrange function.
Suppose further that a constraint qualification holds. Then we have $\lambda\ne 0$.
\end{theorem}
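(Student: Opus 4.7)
The strategy is to derive the stated Lagrange multiplier conditions from the primal infeasibility in Theorem \ref{th1} via the alternative Lemma \ref{th-alternative}. Fix a critical direction $d\in\R^s$. First I would homogenize the system from Theorem \ref{th1} by introducing an auxiliary scalar $u\geqq 0$: the enlarged system $\nabla f_i(\bar x)z+f_i^{\pr\pr}(\bar x,d)u<0$ ($i\in I(\bar x,d)$), $\nabla g_j(\bar x)z+g_j^{\pr\pr}(\bar x,d)u<0$ ($j\in J(\bar x,d)$), $-u<0$ in $(z,u)\in\R^s\times\R$ is also infeasible, because for any solution with $u>0$ the rescaling $z\mapsto z/u$ would produce a solution of (\ref{1})--(\ref{2}), contradicting Theorem \ref{th1}.

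Second, I would cast this infeasibility in the format of (\ref{7}) by taking the columns of $A$ to be $\nabla f_i(\bar x)^T$ for $i\in I(\bar x,d)$, $\nabla g_j(\bar x)^T$ for $j\in J(\bar x,d)$, together with one zero column matching the row $-u<0$, and by letting $C$ be the row vector with entries $f_i^{\pr\pr}(\bar x,d)$, $g_j^{\pr\pr}(\bar x,d)$, $-1$; the matrices $B$ and $D$ are empty. Lemma \ref{th-alternative} then furnishes nonnegative multipliers $\lambda_i$ ($i\in I(\bar x,d)$), $\mu_j$ ($j\in J(\bar x,d)$), and $\gamma$, not all zero, with $\sum_i\lambda_i\nabla f_i(\bar x)+\sum_j\mu_j\nabla g_j(\bar x)=0$ and $\sum_i\lambda_i f_i^{\pr\pr}(\bar x,d)+\sum_j\mu_j g_j^{\pr\pr}(\bar x,d)\geqq\gamma$.

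Third, extending $\lambda_i:=0$ for $i\notin I(\bar x,d)$ and $\mu_j:=0$ for $j\notin J(\bar x,d)$, the complementary slackness $\mu_j g_j(\bar x)=0$ holds for every $j$ (each $\mu_j$ is zero or supported on $A(\bar x)$), the orthogonalities in (\ref{4}) follow from the very definitions of $I(\bar x,d)$ and $J(\bar x,d)$, the stationarity $\nabla L(\bar x)=0$ is the first dual equality, and (\ref{10}) follows from $\gamma\geqq 0$. The nontriviality $(\lambda,\mu)\ne 0$ is obtained by contradiction: if every $\lambda_i$ and $\mu_j$ vanished, the dual inequality would read $-\gamma\geqq 0$, forcing $\gamma=0$ and hence the whole multiplier vector in (\ref{8}) to be zero, violating the requirement $y\ge 0$.

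The last assertion, that $\lambda\ne 0$ under a constraint qualification, follows by the standard Fritz John to Kuhn--Tucker reduction: if $\lambda=0$, one would obtain a nontrivial $\mu\geqq 0$ supported on $A(\bar x)$ satisfying $\sum_j\mu_j\nabla g_j(\bar x)=0$, which is excluded by the usual CQs such as Mangasarian--Fromovitz. The delicate point in the whole argument is the homogenization step: one must attach the constants $f_i^{\pr\pr}(\bar x,d)$ and $g_j^{\pr\pr}(\bar x,d)$ to a dedicated auxiliary variable so that the homogeneous alternative of Lemma \ref{th-alternative} produces the inhomogeneous second-order inequality (\ref{10}) rather than a purely homogeneous dual relation.
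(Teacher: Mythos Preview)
Your proposal is correct and follows the same strategy as the paper: translate the primal infeasibility of Theorem~\ref{th1} into dual multipliers via Lemma~\ref{th-alternative}, then extend the multipliers by zero outside $I(\bar x,d)$ and $J(\bar x,d)$ to obtain (\ref{6}), (\ref{4}), (\ref{10}).

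The only difference is a small, unnecessary detour in your homogenization. You adjoin the row $-u<0$ to force $u>0$, which brings in an extra multiplier $\gamma$ that you then argue away. The paper does not do this: it applies Lemma~\ref{th-alternative} directly with $A$ consisting only of the gradient columns and $C$ only of the second-derivative entries, and with $B,D$ empty. The constraint $u\geqq 0$ is already built into the format of system~(\ref{7}), and the dual condition $y^TC\geqq 0$ in~(\ref{8}) \emph{is} precisely the inhomogeneous inequality~(\ref{10}); no auxiliary row is needed to produce it. The paper's infeasibility claim for $A^Tz+C^Tu<0$, $u\geqq 0$ does require handling $u=0$ (if $A^Tz<0$ then $A^T(tz)+C^T<0$ for large $t$), which it leaves implicit; your version sidesteps that case at the cost of the extra multiplier. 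Either route works, but your closing remark that one ``must'' attach an auxiliary variable overstates the point: Lemma~\ref{th-alternative} is already designed so that the dual inequality $y^TC\geqq 0$ captures the affine (second-order) part directly.
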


\begin{proof}
Consider the matrix $A$ whose columns are 
\[
\{\nabla f_i(\bar x)\mid i\in I(\bar x,d)\}\quad\textrm{and}\quad  \{\nabla g_j(\bar x)\mid j\in J(\bar x,d)\},
\]
and the row-matrix $C$  whose components are
\[
\{f_i^{\pr\pr}(\bar x,d)\mid i\in I(\bar x,d)\}\}\quad\textrm{and}\quad \{g_j^{\pr\pr}(\bar x,d)\mid j\in J(\bar x,d)\}\}.
\]
With these notations Theorem \ref{th1} claims that the linear system $A^Tz+C^Tu<0$, $u\geqq 0$, where $u\in\R$, has no solutions. 
It follows from Lemma \ref{th-alternative} that there exist vectors $\lambda\in\R^n$ and $\mu\in\R^m$ with $\lambda\geqq 0$, $\mu\geqq 0$, $(\lambda,\mu)\ne (0,0)$ such that conditions (\ref{6}) and (\ref{10}) are satisfied. According to the assumptions $d$ is critical and $\nabla L(\bar x)=0$ we obtain that conditions (\ref{4}) are satisfied.
\end{proof}

\section{The Saddle Points of the Lagrange Function and Weak Efficiency}
\label{s6}\setcounter{equation}{0}

Consider the vector problem (P) and the scalar Lagrange function
\[
L_\lambda(x,\mu)=\scalpr{\lambda}{f(x)}+\scalpr{\mu}{g(x)},\quad\lambda\in\Lambda,\;\mu\geqq 0.
\]

\begin{definition}
A point $(\bar x,\bar\mu)\in X\times [0,+\infty)^m$ is called a Kuhn-Tucker saddle point of the Lagrange function if 
\begin{equation}\label{9}
L_{\lambda}(\bar x,\mu)\leqq L_{\lambda}(\bar x,\bar\mu)\leqq L_{\lambda}(x,\bar\mu),\quad\forall x\in X,\;\forall\mu\geqq 0.
\end{equation}
\end{definition}

Denote the vector $(\nabla f_1,\nabla f_2,\ldots,\nabla f_n)$ by $\nabla f$. 

The following definition is an extension to vector problems of the respective notion by Ivanov \cite{OptLett-2} which holds for scalar minimization problems.

\begin{definition}
Let {\rm (P)} be a Fr\'echet differentiable problem. We call {\rm (P)} Kuhn-Tucker saddle point invex (for short, KTSP-invex) if
there exists a vector function $\eta: X\times X\to\R^s$  such that the following implication holds:

\medskip
\noindent
$\left.
\begin{array}{l}
x\in X,\; \\
u\in X,\; \\
g(u)\leqq 0 
\end{array}
\right]
\quad$
imply
$\quad
\left[
\begin{array}{l}
\textrm{either } f(x)-f(u)\geqq\nabla f(u)\eta(x,u), \\
g_{j}(x)\geqq\nabla g_{j}(u)\eta(x,u),\; j\in A(u) \\
\textrm{or } \nabla f(u)\eta(x,u)<0, \\
\nabla g(u)\eta(x,u)\leqq 0.
\end{array}
\right.$
\hfill {\rm (KTSPI)}
\end{definition}
\medskip

\begin{theorem}\label{th3}
Let the  problem {\rm (P)} be Fr\'echet differentiable.  Then {\rm (P)} is KTSP-invex if and only if,
for every triple $(\bar x,\bar\lambda,\bar\mu)$  which satisfies Kuhn-Tucker optimality conditions
\begin{equation}\label{12}
\scalpr{\bar\lambda}{\nabla f(\bar x)}+\sum_{j\in A(\bar x)}\bar\mu_j\nabla g_j(\bar x)=0,\quad\bar x\in S,\; \bar\lambda\ge 0,\; \bar\mu\geqq 0,
\end{equation}
the pair $(\bar x,\bar\mu)$ is a saddle point of the Lagrange function $L_{\bar\lambda}$.
\end{theorem}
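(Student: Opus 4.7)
The plan is to prove necessity by direct substitution into KTSPI and sufficiency by contrapositive, applying Lemma~\ref{th-alternative} twice (once in its Motzkin form, once in its Farkas form).

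For necessity, fix a KT triple $(\bar x,\bar\lambda,\bar\mu)$ satisfying (\ref{12}); the complementary slackness implicit there yields $\scalpr{\bar\mu}{g(\bar x)}=0$. The inequality $L_{\bar\lambda}(\bar x,\mu)\leqq L_{\bar\lambda}(\bar x,\bar\mu)$ for every $\mu\geqq 0$ reduces to $\scalpr{\mu}{g(\bar x)}\leqq 0$, which is immediate from $g(\bar x)\leqq 0$. For the other inequality, fix arbitrary $x\in X$ and invoke KTSPI at the pair $(x,\bar x)$. The Case B alternative is impossible: weighting $\nabla f_i(\bar x)\eta(x,\bar x)<0$ by $\bar\lambda_i$ (using $\bar\lambda\ne 0$) and $\nabla g_j(\bar x)\eta(x,\bar x)\leqq 0$ by $\bar\mu_j\geqq 0$ for $j\in A(\bar x)$, then summing, yields a strictly negative total, contradicting the gradient identity in (\ref{12}). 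Hence Case A applies, and a parallel weighted sum of its inequalities, combined with (\ref{12}) and $g_j(\bar x)=0$ on $A(\bar x)$, produces $L_{\bar\lambda}(x,\bar\mu)\geqq L_{\bar\lambda}(\bar x,\bar\mu)$.

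For sufficiency I argue contrapositively. Suppose (P) is not KTSP-invex. Unpacking the definition (and using the freedom to choose $\eta$ pointwise), this is equivalent to the existence of $x^*,u^*\in X$ with $g(u^*)\leqq 0$ for which no single $v\in\R^s$ realises either alternative of KTSPI at $(x^*,u^*)$. Inconsistency of the Case B system $\nabla f_i(u^*)v<0$, $\nabla g_j(u^*)v\leqq 0$ for $j\in A(u^*)$, together with the Motzkin form of Lemma~\ref{th-alternative}, supplies multipliers $\lambda^*\ge 0$ and $\mu^*\geqq 0$ making $(u^*,\lambda^*,\mu^*)$ a KT triple. Inconsistency of the (nonstrict) Case A system $\nabla f_i(u^*)v\leqq f_i(x^*)-f_i(u^*)$, $\nabla g_j(u^*)v\leqq g_j(x^*)$ for $j\in A(u^*)$, together with the Farkas form of Lemma~\ref{th-alternative}, supplies nonnegative multipliers $(\lambda',\mu')\ne 0$ with $\sum_i\lambda'_i\nabla f_i(u^*)+\sum_{j\in A(u^*)}\mu'_j\nabla g_j(u^*)=0$ and $\scalpr{\lambda'}{f(x^*)-f(u^*)}+\sum_{j\in A(u^*)}\mu'_j g_j(x^*)<0$.

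The argument then splits on whether $\lambda'\ne 0$. If so, $(u^*,\lambda',\mu')$ is itself a KT triple, and the displayed strict inequality, after complementary slackness at $u^*$, reads $L_{\lambda'}(x^*,\mu')<L_{\lambda'}(u^*,\mu')$, contradicting the saddle point hypothesis. If $\lambda'=0$, then $\mu'\ne 0$ satisfies $\sum\mu'_j\nabla g_j(u^*)=0$ and $\sum\mu'_j g_j(x^*)<0$; combining with the Motzkin data, $(u^*,\lambda^*,\mu^*+t\mu')$ remains a KT triple for every $t>0$ (gradient identity, nonnegativity and complementary slackness are all preserved), and
\[
L_{\lambda^*}(x^*,\mu^*+t\mu')-L_{\lambda^*}(u^*,\mu^*+t\mu')=\scalpr{\lambda^*}{f(x^*)-f(u^*)}+\scalpr{\mu^*}{g(x^*)}+t\scalpr{\mu'}{g(x^*)}
\]
becomes strictly negative for $t$ sufficiently large, again contradicting the hypothesis. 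The main obstacle is precisely this degenerate ``$\lambda'=0$'' subcase: the Farkas alternative certifies only a nontrivial combination of multipliers, not one with a nontrivial $f$-weight, and so does not by itself produce a bona fide KT triple. The remedy of adding $t\mu'$ to the Motzkin multipliers $\mu^*$ from the other alternative simultaneously preserves KT structure and inherits the strict saddle violation for large $t$.
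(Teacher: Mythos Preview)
Your necessity argument is essentially the paper's: both rule out Case~B via a weighted sum against the KT gradient identity and then harvest the right-hand saddle-point inequality from Case~A.

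Your sufficiency argument is correct but follows a genuinely different route. The paper argues directly rather than by contrapositive: for fixed $x\in X$ and $\bar x\in S$ it sets up the linear program
\[
\min\ \sum_i\lambda_i\bigl(f_i(x)-f_i(\bar x)\bigr)+\sum_{j\in A(\bar x)}\mu_j\,g_j(x)
\]
over all $(\lambda,\mu)$ satisfying (\ref{12}) together with the normalisation $\sum_i\lambda_i=1$. The saddle-point hypothesis forces the optimal value to be nonnegative whenever the feasible set is nonempty; LP duality then returns $\eta$ and $v\geqq 0$ realising Case~A, while infeasibility of the primal yields, via Lemma~\ref{th-alternative}, a solution of Case~B. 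The normalisation $\sum_i\lambda_i=1$ is precisely what lets the paper sidestep your degenerate subcase $\lambda'=0$: it is baked into the primal constraints, so no separate combining trick is needed. Your approach trades one appeal to full LP duality for two direct applications of the alternative theorem (Motzkin, then Farkas) plus the perturbation $\mu^*+t\mu'$ to merge them; this is a bit longer but makes transparent where each half of the hypothesis is consumed.

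One small remark: in Case~B you restrict to $j\in A(u^*)$, whereas the paper's stated definition of KTSPI writes the full vector inequality $\nabla g(u)\eta\leqq 0$. The paper's own proof, however, also only establishes and uses the active-index version (see the passage around~(\ref{18})), so your reading is consistent with how the argument actually runs.
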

\begin{proof}
Suppose that (P) is KTSP-invex. Let the triple $(\bar x,\bar\lambda,\bar\mu)$ satisfy Kuhn-Tucker conditions. We prove that $(\bar x,\bar\mu)$ is a saddle
point of Lagrange function. Suppose that $x$ is an arbitrary point from $X$. The left inequality in (\ref{9}) follows from $\bar x\in S$, $\bar\mu\geqq 0$,  $\mu\geqq 0$, and $\bar\mu_j g_j(\bar x)=0$, $j=1,2,...,m$. Therefore, it is enough to prove that
\begin{equation}\label{11}
\scalpr{\bar\lambda}{f(x)-f(\bar x)}+\scalpr{\bar\mu}{g(x)}\geqq 0.
\end{equation}

It follows from Lemma \ref{th-alternative}, by (\ref{12}), that the system
\begin{equation}\label{18}
\nabla f(\bar x)\eta<0,\quad \nabla g(\bar x)\eta\leqq 0
\end{equation}
is inconsistent. According to KTSP-invexity there exists $\eta\in\R^s$ such that
\begin{equation}\label{13}
f_i(x)-f_i(\bar x)\geqq \nabla f_i(\bar x)\eta,\quad i=1,2,\ldots, n
\end{equation}
\begin{equation}\label{14}
g_j(x)\geqq \nabla g_j(\bar x)\eta,\quad j\in A(\bar x). 
\end{equation}
Let us multiply (\ref{13}) by $\bar\lambda_i$ and (\ref{14}) by $\bar\mu_i$ and add all obtained inequalities. Then, we can see that (\ref{11}) is satisfied by Kuhn-Tucker conditions (\ref{12}), which implies that $(\bar x,\bar\mu)$ is a saddle point of $L_{\bar\lambda}$.

Suppose that every pair $(\bar x,\bar\mu)$, which satisfies conditions (\ref{12}) together with the Lagrange multiplier $\bar\lambda$,  is a saddle point of the Lagrange function $L_{\bar\lambda}$. We prove that (P) is KTSP-invex. Choose an arbitrary point $x\in X$.    Consider the linear programming problem with variables $\lambda_1, \lambda_2,\ldots,\lambda_n$, $\mu_j$, $j\in A(\bar x)$

\medskip
\noindent
\begin{tabular}{@{}ll}
Minimize & $\sum_{i=1}^n\lambda_i\left[f_i(x)-f_i(\bar x)\right]+$ $\sum_{j\in A(\bar x)} \mu_j\,g_j(x)$ \\
\smallskip\noindent
subject to & $\sum_{i=1}^n\lambda_i \nabla f_i(\bar x)+\sum_{j\in A(\bar x)} \mu_j\nabla g_j(\bar x)=0$, \\
& $ \sum_{i=1}^n\lambda_i=1$, \\
&  $\lambda_i\geqq 0$, $i=1,2,...,m$,\quad $\mu_j\geqq 0$, $j\in A(\bar x)$.
\end{tabular}

\medskip
It follows from the hypothesis that the objective function of this problem is non-negative over the feasible set or  the feasible set is empty. Therefore this problem is solvable with a non-negative minimal value or it is unsolvable when its feasible set is empty. The dual problem is the following one:

\medskip
\noindent
\begin{tabular}{@{}lll}
Maximize & $v$ & \\
subject to & $\nabla f(\bar x)\eta+v$ & $\leqq f(x)-f(\bar x)$ \\
&  $\nabla g_j(\bar x)\eta$  & $\leqq g_j(x)$, $j\in A(\bar x)$. \\
\end{tabular}
\medskip

Consider the case when the primal problem is solvable.
According to Duality Theorem in Linear Programming the dual problem is solvable, too, with the same optimal value. Therefore, there exist $v\ge 0$ and $\eta\in\R^s$, which satisfy the constraints of the dual problem.

Consider the case when the primal problem is infeasible. By Lemma \ref{th-alternative} the infeasibility of the primal problem implies that the system 
(\ref{18}) for $\eta$ is consistent. Both cases bring that (P) is KTSP-invex.
\end{proof}

We introduce the following definition:
\begin{definition}
Let {\rm (P)} be a Fr\'echet differentiable problem. We call {\rm (P)} second-order Kuhn-Tucker saddle point invex (for short, second-order KTSP-invex) if for every $x\in X$, $u\in S$, and for every critical direction $d$ at $u$ such that there exist the derivatives $f^{\pr\pr}(u,d)$, $g^{\pr\pr}(u,d)$ with finite values at least one of the systems for $\eta$ and $\omega$

\medskip
$\left\{
\begin{array}{l}
\nabla f_i(u)\eta+\omega f^{\pr\pr}_i(u,d)\leqq f_i(x)-f_i(u),\; i=1,2,\ldots,n \\
\nabla g_j(u)\eta+\omega g^{\pr\pr}_j(u,d) \leqq g_j(x),\; j\in A(u), \\
\omega\geqq 0
\end{array}\right.$
\medskip

\noindent
and

\medskip
$\left\{
\begin{array}{l}
 \nabla f(u)\eta+\omega f^{\pr\pr}(u,d)<0, \\
\nabla g(u)\eta +\omega g^{\pr\pr}(u,d)\leqq 0, \\
\omega\geqq 0
\end{array}\right.$
\medskip

\noindent
has a solution.
\end{definition}

\begin{theorem}\label{th7}
Every Fr\'echet differentiable KTSP-invex problem with inequality constraints is second-order KTSP-invex.
\end{theorem}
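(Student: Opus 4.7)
The plan is to observe that the second-order condition reduces to the first-order one when the auxiliary scalar $\omega$ is set to $0$, so that KTSP-invexity supplies the desired $\eta$ for free. The theorem is essentially the statement that enlarging the solution pool from a single vector $\eta$ to a pair $(\eta,\omega)$ with $\omega \geqq 0$ cannot shrink the class of problems that qualify.

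More precisely, I would fix an arbitrary triple $(x,u,d)$ with $x\in X$, $u\in S$, and $d$ a critical direction at $u$ at which $f^{\pr\pr}(u,d)$ and $g^{\pr\pr}(u,d)$ exist with finite values. Since (P) is KTSP-invex and the pair $(x,u)$ is admissible (because $u\in S$ means $g(u)\leqq 0$), applying the implication (KTSPI) to $(x,u)$ yields a vector $\eta_0 := \eta(x,u)\in\R^s$ satisfying one of the two alternatives of the definition.

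In the first case, setting $(\eta,\omega):=(\eta_0,0)$ makes the products $\omega f_i^{\pr\pr}(u,d)$ and $\omega g_j^{\pr\pr}(u,d)$ vanish, and the inequalities $\nabla f_i(u)\eta_0\leqq f_i(x)-f_i(u)$ together with $\nabla g_j(u)\eta_0\leqq g_j(x)$ for $j\in A(u)$ give exactly a solution to the first of the two systems in the second-order definition. In the second case, the same choice $(\eta,\omega):=(\eta_0,0)$ turns $\nabla f(u)\eta_0<0$ and $\nabla g(u)\eta_0\leqq 0$ into a solution of the second system.

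Either way, for the arbitrarily chosen triple $(x,u,d)$ at least one of the two systems in the definition of second-order KTSP-invexity is consistent, so (P) is second-order KTSP-invex. There is no serious obstacle here; the only thing to notice is that $\omega$ is free to take the value $0$, which is permitted by $\omega\geqq 0$, and that this reduction recovers precisely the two first-order alternatives from (KTSPI).
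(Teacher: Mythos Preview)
Your proposal is correct and follows the same idea as the paper: set $\omega=0$ so that the second-order terms drop out, and then the vector $\eta$ supplied by first-order KTSP-invexity solves the corresponding second-order system. The paper's proof additionally specializes to $d=0$ and invokes $f^{\pr\pr}(u,0)=g_j^{\pr\pr}(u,0)=0$, but this is unnecessary once $\omega=0$; your version, which handles an arbitrary critical direction $d$ directly, is in fact slightly cleaner since the definition quantifies universally over $d$.
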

\begin{proof}
Let (P) be KTSP-invex. Therefore, for all points $x\in X$ and $u\in X$ with $g(u)\leqq 0$ there exists $v\in\R^s$ such that either

\[
\nabla f(u)v\leqq f(x)-f(u),\quad\nabla g_{j}(u)v\leqq g_{j}(x),\; j\in A(u),
\] 
or
\[
\nabla f(u)v<0,\quad\nabla g(u)v\leqq 0.
\]
The choice $\eta=v$, $\omega=0$, and $d=0$ shows that (P) is second-order KTSP-invex,  because $f^{\pr\pr}(u,0)=0$ and $g_j^{\pr\pr}(u,0)=0$.
\end{proof}

\begin{theorem}\label{th4}
Let the  problem {\rm (P)} be Fr\'echet differentiable.  Then {\rm (P)} is second-order KTSP-invex if and only if
for every triple $(\bar x,\bar\lambda,\bar\mu)$,  which satisfies the second-order Kuhn-Tucker necessary optimality conditions {\rm (\ref{6})} and 
\rm{(\ref{10})}, the pair $(\bar x,\bar\mu)$ is a saddle point of the Lagrange function $L_{\bar\lambda}$.
\end{theorem}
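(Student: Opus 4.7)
The plan is to mirror the proof of Theorem \ref{th3}, augmenting every occurrence of $\eta$ by a nonnegative scalar $\omega$ that absorbs the second-order information. The alternative between the two systems in the definition of second-order KTSP-invexity is exactly what Lemma \ref{th-alternative} yields when the gradient vectors $\nabla f_i(\bar x)$, $\nabla g_j(\bar x)$ are augmented by the scalars $f_i^{\pr\pr}(\bar x, d)$, $g_j^{\pr\pr}(\bar x, d)$, viewed as coefficients of the new nonnegative variable $\omega$.

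For the forward direction, suppose {\rm (P)} is second-order KTSP-invex and that the triple $(\bar x, \bar\lambda, \bar\mu)$ satisfies (\ref{6}) and (\ref{10}) for some critical direction $d$. The left inequality of (\ref{9}) is immediate from $\bar x \in S$, $\mu \geqq 0$ and the complementary slackness in (\ref{6}). For the right inequality, note that under the identification above $(\bar\lambda, \bar\mu)$ solves the system (\ref{8}), so Lemma \ref{th-alternative} rules out a solution of the strict system in the definition of second-order KTSP-invexity. Fixing any $x \in X$, the hypothesis therefore produces $(\eta, \omega)$ satisfying the non-strict system. Multiplying its $i$-th $f$-inequality by $\bar\lambda_i$ and its $j$-th $g$-inequality by $\bar\mu_j$, summing, using $\nabla L(\bar x) = 0$ from (\ref{6}) to cancel the $\eta$-terms, and invoking (\ref{10}) with $\omega \geqq 0$ to ensure $\omega L^{\pr\pr}(\bar x, d) \geqq 0$, one obtains $\scalpr{\bar\lambda}{f(x) - f(\bar x)} + \scalpr{\bar\mu}{g(x)} \geqq 0$, which, combined with $\scalpr{\bar\mu}{g(\bar x)} = 0$, is precisely $L_{\bar\lambda}(\bar x, \bar\mu) \leqq L_{\bar\lambda}(x, \bar\mu)$.

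For the converse, fix $x \in X$, $u \in S$, and a critical direction $d$ at $u$ along which $f^{\pr\pr}(u, d)$ and $g^{\pr\pr}(u, d)$ exist and are finite. Consider the linear program in the variables $\lambda_i$ ($i = 1, \dots, n$) and $\mu_j$ ($j \in A(u)$) that minimizes $\scalpr{\lambda}{f(x) - f(u)} + \sum_{j \in A(u)} \mu_j\, g_j(x)$ subject to $\sum_i \lambda_i \nabla f_i(u) + \sum_{j \in A(u)} \mu_j \nabla g_j(u) = 0$, $\sum_i \lambda_i f_i^{\pr\pr}(u, d) + \sum_{j \in A(u)} \mu_j g_j^{\pr\pr}(u, d) \geqq 0$, $\sum_i \lambda_i = 1$, $\lambda \geqq 0$, and $\mu \geqq 0$. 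If this program is feasible, any feasible $(\lambda, \mu)$, extended by $\mu_j := 0$ for $j \notin A(u)$, yields a triple $(u, \lambda, \mu)$ satisfying (\ref{6}) and (\ref{10}); the saddle-point hypothesis then gives $L_\lambda(u, \mu) \leqq L_\lambda(x, \mu)$, and complementary slackness makes the primal objective nonnegative. LP duality next supplies a dual optimum $(\eta, \omega, v)$ with $v \geqq 0$, and discarding the nonnegative $v$ leaves the non-strict system of the definition satisfied. If the primal is infeasible, a rescaling argument shows that (\ref{8}) has no solution, so by Lemma \ref{th-alternative} (\ref{7}) has one, producing the required $(\eta, \omega)$ with $\omega \geqq 0$ for the strict alternative.

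The main technical obstacle will be the book-keeping in the primal--dual translation: one must verify that the dual variable $v$ of the normalization constraint $\sum_i \lambda_i = 1$ enters each $f$-inequality with a sign that allows it to be dropped while still preserving $\leqq f_i(x) - f_i(u)$, and that rescaling by $\sum_i \bar\lambda_i$ makes feasibility of the primal LP equivalent to the ``$(y, z)$'' branch of Lemma \ref{th-alternative}. Once these identifications are settled, both directions collapse to the multiplication-and-sum calculation already performed in Theorem \ref{th3}.
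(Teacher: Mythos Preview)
Your proposal is correct and follows essentially the same route as the paper's proof. Both directions rely on the identical linear program (with the extra second-order constraint $\sum_i\lambda_i f_i^{\pr\pr}+\sum_{j\in A(u)}\mu_j g_j^{\pr\pr}\geqq 0$ and dual variable $\omega\geqq 0$), invoke LP duality in the feasible case and Lemma~\ref{th-alternative} in the infeasible case, and finish with the same multiply-and-sum computation; the technical points you flag about dropping the nonnegative $v$ and rescaling to pass between $\sum_i\lambda_i=1$ and $\lambda\ge 0$ are exactly the identifications the paper uses implicitly.
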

\begin{proof}
Suppose that (P) is second-order KTSP-invex. Let the triple $(\bar x,\bar\lambda,\bar\mu)$ satisfy the Kuhn-Tucker conditions with respect to an arbitrary direction $d$ which is critical at $\bar x$. We prove that $(\bar x,\bar\mu)$ is a saddle
point of the Lagrange function. Suppose that $x$ is an arbitrary point from $X$. The left inequality in (\ref{9}) follows from $\bar x\in S$, $\bar\mu\geqq 0$, $\mu\geqq 0$, and  $\bar\mu_j g_j(\bar x)=0$, $j=1,2,...,m$. Therefore, it is enough to prove that inequality (\ref{11}) holds

It follows from Lemma \ref{th-alternative}, by (\ref{6}) and (\ref{10}) that the system
\[
\nabla f(\bar x)\eta+\omega f^{\pr\pr}(\bar x,d)<0,\quad \nabla g(\bar x)\eta +\omega g^{\pr\pr}(\bar x,d)\leqq 0,\quad\omega\ge0
\]
is inconsistent. According to KTSP-invexity there exists $\eta\in\R^s$ and $\omega\ge 0$ such that
\begin{equation}\label{16}
f_i(x)-f_i(\bar x)\geqq \nabla f_i(\bar x)\eta+\omega f^{\pr\pr}_i(\bar x,d),\quad i=1,2,\ldots, n
\end{equation}
\begin{equation}\label{17}
g_j(x)\geqq \nabla g_j(\bar x)\eta +\omega g^{\pr\pr}_j(\bar x,d),\quad j\in A(\bar x). 
\end{equation}
Let us multiply (\ref{16}) by $\bar\lambda_i$ and (\ref{17}) by $\bar\mu_i$ and add all obtained inequalities. Then, we can see that (\ref{11}) is satisfied by Kuhn-Tucker conditions (\ref{6}) and (\ref{10}), which implies that $(\bar x,\bar\mu)$ is a saddle point of $L_{\bar\lambda}$.

Suppose that $d$ is an arbitrary direction, and every pair $(\bar x,\bar\mu)$, which satisfies conditions (\ref{6}) and (\ref{10}) together with the Lagrange multiplier $\bar\lambda$,  is a saddle point of the Lagrange function $L_{\bar\lambda}$. We prove that (P) is second-order KTSP-invex. Choose an arbitrary point $x\in X$.    Consider the linear programming problem with variables $\lambda_1, \lambda_2,\ldots,\lambda_n$, $\mu_j$, $j\in A(\bar x)$

\medskip
\noindent
\begin{tabular}{@{}ll}
Minimize & $\sum_{i=1}^n\lambda_i\left(f_i(x)-f_i(\bar x)\right)+$ $\sum_{j\in A(\bar x)} \mu_j\,g_j(x)$ \\
%\smallskip\noindent
subject to & $\sum_{i=1}^n\lambda_i \nabla f_i(\bar x)+\sum_{j\in A(\bar x)} \mu_j\nabla g_j(\bar x)=0$, \\
& $\sum_{i=1}^n\lambda_i f_i^{\pr\pr}(\bar x,d)+ \sum_{j\in A(\bar x)}\mu_j g_j^{\pr\pr}(\bar x,d)\geqq 0$, \\
& $ \sum_{i=1}^n\lambda_i=1$, \\
&  $\lambda_i\geqq 0$, $i=1,2,...,m$,\quad $\mu_j\geqq 0$, $j\in A(\bar x)$.
\end{tabular}

\medskip
It follows from the hypothesis that the objective function of this problem is non-negative over the feasible set or  the feasible set is empty. Therefore this problem is solvable with a non-negative minimal value or it is unsolvable when its feasible set is empty. The dual problem is the following one:

\medskip
\noindent
\begin{tabular}{@{}lll}
Maximize & $v$ & \\
subject to & $\nabla f(\bar x)\eta+\omega f^{\pr\pr}(\bar x,d)+v$ & $\leqq f(x)-f(\bar x)$, \\
&  $\nabla g_j(\bar x)\eta +\omega g^{\pr\pr}_j(\bar x,d)$ & $\leqq g_j(x)$, $j\in A(\bar x)$, \\
 & $ \omega\geqq 0$.
\end{tabular}
\medskip

Consider the case when the primal problem is solvable.
According to Duality Theorem in Linear Programming the dual problem is solvable, too, with the same optimal value. Therefore, there exist $v\ge 0$, $\eta\in\R^s$ and $\omega\in[0,+\infty)$, which satisfy the constraints of the dual problem.

Consider the case when the primal problem is infeasible. By Lemma \ref{th-alternative} the infeasibility of the primal problem implies that the system 
\[
\nabla f(\bar x)\eta +\omega f^{\pr\pr}(\bar x,d)<0,\quad
\nabla g(\bar x)\eta +\omega g^{\pr\pr}(\bar x,d)\leqq 0,\quad \omega\geqq 0.
\]
for $\eta$ and $\omega$ is consistent. Both cases bring that (P) is second-order KTSP-invex.
\end{proof}

The following example shows that the converse claim in Theorem \ref{th7} is not satisfied.

\begin{example}\label{ex3}
Consider the  problem (P) where $X\equiv\R^2$, $f=(f_1,f_2)$ is the vector function of two variables such that 
\[f_1(x_1,x_2)=(x_1^2+x_2^2)^2-2x_1^2+2x_2^2,\quad f_2(x_1,x_2)=(x_1^2-1)^2+2x_2^2.\] 
and $g$ is the scalar function 
\[
g(x_1,x_2)=x_1^2+x_2^2-1.
\]
The KT stationary points are the pairs $(0,0)$, $(1,0)$, $(-1,0)$ with Lagrange multipliers $\bar\mu=0$, $\lambda\in\Lambda$. The pair $(\bar x,\bar\mu)$ with $\bar x=(0,0)$ and $\bar\mu=0$ is not a  saddle point of $L_\lambda$ for every $\lambda=(\lambda_1,\lambda_2)\in\Lambda$, because 
\[
L_\lambda(x_{\eps},\bar\mu)=\lambda_1(\eps^4-2\eps^2)+\lambda_2(\eps^2-1)^2<L_\lambda(\bar x,\bar\mu),\; x_{\eps}=(\eps,0)
\]
 for all $\eps\in(0,\sqrt{2})$. It follows from Theorem \ref{th3} that this problem is not KTSP-invex. The point $(0,0)$ is not second-order stationary, because every direction is critical at $(0,0)$, and 
\[
\scalpr{d}{\nabla^2 L_\lambda(\bar x,\bar\mu)d}=-4d_1^2+4d_2^2<0,\;\bar x=(0,0),\;\bar \mu=0
\]
 if $d_2^2<d_1^2$, where $d=(d_1,d_2)$ is a direction. The points $(1,0)$ and $(-1,0)$ are second-order stationary, because the multipliers $\lambda_1=1$, $\lambda_2=0$, $\mu=0$, where $L=\lambda_1 f_1+\lambda_2 f_2+\mu g$ is the Lagrange function, satisfy the second-order necessary dual optimality conditions. Since these points compose with the multiplier $\mu=0$ saddle points of $L_\lambda$, then by Theorem \ref{th4}, the problem is second-order KTSP-invex.
\end{example}

\section{Optimality Conditions with Second-Order KT-Pseudoinvex Problems}
\label{s4}\setcounter{equation}{0}

Consider the problem (P). We denote  the vector 
\[
(f^{\pr\pr}_1(x,d), f^{\pr\pr}_2(x,d),\dots, f^{\pr\pr}_n(x,d))
\]
by $f^{\pr\pr}(x,d)$, and the set of all critical directions at $x$ by $D(x)$. 

The following notion  was introduced by Osuna-Gomez, Beato-Moreno, Rufian-Lizana \cite{osu99}:

%\begin{definition}[\cite{ara08,osu99}]\label{df2}
\begin{definition}\label{df2}
Let the  problem {\rm (P)} be Fr\'echet differentiable.
Then it is called KT-pseudoinvex-I if  there exists a map $\eta: X\times X\to\R^s$ 
such that the following implication holds:
\[
\left.
\begin{array}{l}
x\in X,\; y\in X,\;  f(y)<f(x) \\
g(x)\leqq 0,\; g(y)\leqq 0 \\
\end{array}\right]
\quad\Rightarrow \quad
\left[
\begin{array}{l}
\nabla f(x)\eta(x,y)<0 \\
\nabla g_{j}(x)\eta(x,y)\leqq 0, j\in A(x).
\end{array}
\right.
\]
\end{definition}

Definition \ref{df2} is a generalization of the notion of KT-invex problems due to Martin \cite{mar85} for scalar problems, because for such problems KT-invexity is equivalent to KT-pseudoinvexity. The following proposition is the main result in the paper of Osuma-Gomez, Rufian-Lizana, Ruiz-Canales \cite{osu98} (see Theorem 2.3).  It is a generalization of  Theorem 2.1  due to Martin \cite{mar85}.

\begin{proposition}\label{CharKT-I}
Let the problem (P) be Fr\'echet differentiable. Then it  is KT-pseudoinvex-I if and only if every KT stationary point is a weak  global minimizer.
\end{proposition}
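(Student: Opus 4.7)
The plan is to prove the equivalence by treating each direction separately, with the theorem of the alternative (Lemma~\ref{th-alternative}) being the key tool for the converse.

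For the forward implication, suppose (P) is KT-pseudoinvex-I and let $\bar x$ satisfy the KT conditions with multipliers $\bar\lambda\ge 0$ and $\bar\mu\geqq 0$:
\[
\sum_{i=1}^n \bar\lambda_i\nabla f_i(\bar x)+\sum_{j\in A(\bar x)}\bar\mu_j\nabla g_j(\bar x)=0,\quad \bar\mu_j g_j(\bar x)=0.
\]
I would argue by contradiction: if $\bar x$ were not a weak global minimizer, there would exist $y\in S$ with $f(y)<f(\bar x)$. Then KT-pseudoinvexity applied to the pair $(\bar x,y)$ gives $\eta:=\eta(\bar x,y)$ satisfying $\nabla f_i(\bar x)\eta<0$ for all $i$ and $\nabla g_j(\bar x)\eta\leqq 0$ for all $j\in A(\bar x)$. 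Multiplying by $\bar\lambda_i\geqq 0$ (with at least one $\bar\lambda_i>0$ since $\bar\lambda\ne 0$) and $\bar\mu_j\geqq 0$ and summing, the resulting expression is strictly negative, while the KT stationarity relation forces it to equal $0$, a contradiction.

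For the converse, I would assume every KT stationary point is a weak global minimizer and construct a map $\eta:X\times X\to\R^s$ directly. Fix a pair $(x,y)\in X\times X$ with $g(x)\leqq 0$, $g(y)\leqq 0$, and $f(y)<f(x)$; then $x$ is not a weak global minimizer, hence by hypothesis not a KT stationary point. Using complementary slackness to collapse the inactive constraints, this means the system
\[
\sum_{i=1}^n\lambda_i\nabla f_i(x)+\sum_{j\in A(x)}\mu_j\nabla g_j(x)=0,\quad \lambda\ge 0,\;\mu\geqq 0,
\]
is infeasible. I would then apply Lemma~\ref{th-alternative} with $A$ the matrix whose columns are $\nabla f_i(x)^T$, $B$ the matrix whose columns are $\nabla g_j(x)^T$ for $j\in A(x)$, and $C=D=0$: infeasibility of (\ref{8}) forces solvability of (\ref{7}), i.e., there exists $\eta$ with $\nabla f(x)\eta<0$ and $\nabla g_j(x)\eta\leqq 0$ for $j\in A(x)$. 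Set $\eta(x,y)$ equal to such a solution; on every pair $(x,y)$ that violates the premise of the pseudoinvexity implication, define $\eta(x,y)=0$ arbitrarily.

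The main obstacle is the converse step: one must recognize that the KT stationarity system is exactly the alternative dual to the pseudoinvexity system, so that Lemma~\ref{th-alternative} produces $\eta$ as soon as $x$ fails to be a KT point. Once the matrices $A,B,C,D$ are chosen correctly and complementary slackness is used to restrict to active constraints, the construction of $\eta$ is immediate and no analytic work beyond the alternative is required.
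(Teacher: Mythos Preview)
The paper does not supply its own proof of this proposition; it is quoted from \cite{osu98} (Theorem 2.3). Your argument is correct and is essentially the first-order specialization of the paper's proof of Theorem~\ref{th2}, the second-order analogue. The forward direction is identical to that proof. For the converse, the paper (in Theorem~\ref{th2}) sets up an explicit primal LP maximizing $\sum_i\lambda_i$ over the stationarity constraints and reads off $\eta$ from the dual, whereas you apply Lemma~\ref{th-alternative} directly with $C=D=0$; since Lemma~\ref{th-alternative} is itself proved via LP duality, the two routes are the same Motzkin-type alternative packaged differently, and neither gains anything over the other.
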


Here we study the second-order case. Second-order KT-invex scalar problems are defined by Ivanov. \cite{JOGO}
We introduce the following notion, which is a second-order analog of Definition \ref{df2}.
\begin{definition}
Let {\rm (P)} be a Fr\'echet differentiable problem.
Then we call {\rm (P)} second-order KT-pseudoinvex-I if  there exist maps $d: X\times X\to\R^s$,
$\eta: X\times X\to\R^s$ and a function $\omega: X\times X\to [0,+\infty)$
such that the following implication holds:
\[
\left.
\begin{array}{l}
x\in X,\; y\in X \\
g(x)\leqq 0,\\
g(y)\leqq 0, \\
f(y)<f(x)
\end{array}\right]
\Rightarrow 
\left[
\begin{array}{l}
\exists f^{\pr\pr}(x,d(x,y)),\;\exists g^{\pr\pr}_j(x,d(x,y)),\; j\in A(x) \\
\nabla f(x)\eta(x,y)+
\omega(x,y) f^{\pr\pr}(x,d(x,y))<0 \\
\nabla g_{j}(x)\eta(x,y)+\omega(x,y) g^{\pr\pr}_{j}(x,d(x,y))\leqq 0,\; j\in A(x),\\
\textrm{where}\quad d(x,y)\in D(x).
\end{array}
\right.
\]
We suppose here that there exist $f^{\pr\pr}_i(x,d(x,y))$ and $g^{\pr\pr}_j(x,d(x,y))$ with finite values.
\end{definition}

\begin{theorem}\label{KT-I}
Every Fr\'echet differentiable KT-pseudoinvex-I problem with inequality constraints is second-order KT-pseudoinvex-I.
\end{theorem}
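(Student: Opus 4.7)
The plan is to mirror the proof of Theorem \ref{th7} and reduce the second-order notion to the first-order one by choosing the trivial critical direction. Suppose (P) is KT-pseudoinvex-I with associated map $\eta : X \times X \to \R^s$. I would then define the three maps required by the second-order definition by $d(x,y) := 0$, $\omega(x,y) := 0$, and take the new $\eta$ to coincide with the given first-order one.

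First I would check that $0 \in D(x)$ whenever $x$ is feasible: indeed $\nabla f_i(x)\cdot 0 = 0 \leqq 0$ for every $i$ and $\nabla g_j(x)\cdot 0 = 0 \leqq 0$ for every $j \in A(x)$, so the zero vector satisfies the defining inequalities of a critical direction. Next I would verify that the second-order derivatives at the zero direction exist with finite values. From the definition of $h^{\pr\pr}(x,u)$ with $u=0$ we get
\[
f_i^{\pr\pr}(x,0) = \lim_{t\to+0} 2t^{-2}\bigl[f_i(x) - f_i(x) - 0\bigr] = 0,
\]
and similarly $g_j^{\pr\pr}(x,0) = 0$, so the finiteness hypothesis built into the definition of second-order KT-pseudoinvex-I is met.

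Finally I would verify the implication. Given $x,y \in X$ with $g(x) \leqq 0$, $g(y) \leqq 0$, and $f(y) < f(x)$, KT-pseudoinvexity-I yields $\nabla f(x)\eta(x,y) < 0$ and $\nabla g_j(x)\eta(x,y) \leqq 0$ for every $j \in A(x)$. Since $\omega(x,y) f^{\pr\pr}(x,d(x,y)) = 0$ and $\omega(x,y) g_j^{\pr\pr}(x,d(x,y)) = 0$ by construction, adding these zero terms preserves the inequalities, giving exactly the system demanded by the second-order definition.

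I do not anticipate a genuine obstacle: the argument is the same device used in Theorem \ref{th7}, namely exploiting the fact that the second-order definition with $d = 0$ and $\omega = 0$ collapses to the first-order one. The only points that need to be spelled out carefully are the admissibility of $d = 0$ as a critical direction and the (trivial) vanishing of $f^{\pr\pr}(x,0)$ and $g_j^{\pr\pr}(x,0)$.
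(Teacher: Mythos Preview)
Your proposal is correct and follows essentially the same approach as the paper: set $d(x,y)=0$, $\omega(x,y)=0$, keep the first-order $\eta$, and use $f^{\pr\pr}(x,0)=g_j^{\pr\pr}(x,0)=0$ to collapse the second-order conditions to the first-order ones. The only difference is that you spell out explicitly why $0\in D(x)$ and why the second-order derivatives at the zero direction vanish, which the paper leaves implicit.
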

\begin{proof}
Let (P) be KT-pseudoinvex-I. Therefore, for all feasible points $x$ and $y$ with $f(y)<f(x)$ there exists $u\in\R^s$ such that
\[
\nabla f(x)u<0,\quad\nabla g_{j}(x)u\leqq 0,\; j\in A(x).
\]
The choice $\eta=u$, $\omega=0$, and $d=0$ shows that (P) is second-order KT-pseudoinvex-I,  because $f^{\pr\pr}(x,0)=0$ and $g_j^{\pr\pr}(x,0)=0$.
\end{proof}

\begin{definition}
A point $x\in S$ is called second-order Kuhn-Tucker stationary point (for short, second-order KT point) %if it is a KT stationary point and 
if for every critical direction $d\in\R^s$ at $x$ such that there exist the second-order derivatives $f^{\pr\pr} (x,d)$, $g_j^{\pr\pr} (x,d)$, $j\in A(x)$ with finite values, there are Lagrange multipliers $ \lambda= (\lambda_1, \lambda_2,\ldots, \lambda_n)\ge 0$, $\mu=(\mu_1,\mu_2,\ldots,\mu_m)\geqq 0$,  satisfying equations (\ref{6}) and (\ref{10}).
\end{definition}

Obviously, every second-order KT stationary point $x$ is a first-order KT stationary point. Indeed, for every Fr\'echet differentiable problem there is at least one critical direction $d\in\R^n$ such that there exist the second-order derivatives $f^{\pr\pr} (x,d)$, $g_j^{\pr\pr} (x,d)$, $j\in A(x)$. It is the direction $d=0$. Therefore, conditions (\ref{6}) and (\ref{10}) are satisfied when $d=0$. It follows from (\ref{6})  that $x$ is a KT stationary point. The converse implication does not hold.

\begin{theorem}\label{th2}
Let the problem {\rm (P)}  be Fr\'echet %and second-order directionally
differentiable. Then every second-order KT stationary point is a  weak global Pareto minimizer
if and only if {\rm (P)} is second-order KT-pseudoinvex-I.
\end{theorem}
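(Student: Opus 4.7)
My plan is to handle the two directions of the equivalence separately, with Lemma~\ref{th-alternative} playing the pivotal role in both. The central bookkeeping observation is that, for a fixed $x\in S$ and a fixed critical direction $d\in D(x)$ at which the requisite second-order derivatives are finite, Lemma~\ref{th-alternative} set up with $A=[\nabla f_i(x)]_{i=1}^n$, $B=[\nabla g_j(x)]_{j\in A(x)}$ (as columns), $C=[f^{\pr\pr}_i(x,d)]_{i=1}^n$, $D=[g^{\pr\pr}_j(x,d)]_{j\in A(x)}$ (as rows), toggles between solvability of the strict primal system
\[
\nabla f(x)\eta+\omega f^{\pr\pr}(x,d)<0,\quad\nabla g_j(x)\eta+\omega g^{\pr\pr}_j(x,d)\leqq 0,\; j\in A(x),\quad\omega\geqq 0,
\]
which is exactly the consequent in the definition of second-order KT-pseudoinvex-I, and solvability of the dual system
\[
\sum_{i=1}^n\lambda_i\nabla f_i(x)+\sum_{j\in A(x)}\mu_j\nabla g_j(x)=0,\quad\sum_{i=1}^n\lambda_i f^{\pr\pr}_i(x,d)+\sum_{j\in A(x)}\mu_j g^{\pr\pr}_j(x,d)\geqq 0,
\]
with $\lambda\ge 0$, $\mu\geqq 0$, which (after extending $\mu_j:=0$ for $j\notin A(x)$ to absorb complementary slackness) is exactly the defining relation of $x$ being second-order KT stationary in direction $d$.

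For sufficiency I will assume (P) is second-order KT-pseudoinvex-I and let $\bar x$ be a second-order KT stationary point. Arguing by contradiction, if some $y\in S$ satisfies $f(y)<f(\bar x)$, then applying the definition of second-order KT-pseudoinvex-I at $(\bar x,y)$ yields $d=d(\bar x,y)\in D(\bar x)$ and $(\eta,\omega)$ with $\omega\geqq 0$ solving the primal system, while second-order KT stationarity of $\bar x$ in this same direction $d$ supplies $(\lambda,\mu)$ solving the dual system. Lemma~\ref{th-alternative} forbids both, giving the contradiction. The same contradiction can be read off directly by multiplying the primal inequalities componentwise by $\lambda_i$, $\mu_j$, summing, and using $\lambda\ge 0$ to force some $\lambda_i>0$: the left side is then strictly negative, while the KT relations rewrite it as $0\cdot\eta+\omega L^{\pr\pr}(\bar x,d)\geqq 0$.

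For necessity I will take arbitrary $x,y\in X$ with $g(x)\leqq 0$, $g(y)\leqq 0$ and $f(y)<f(x)$. Then $x$ is not a weak global Pareto minimizer, so by hypothesis $x$ is not second-order KT stationary; consequently there exists at least one critical direction $d\in D(x)$ with finite second-order derivatives for which the dual system has no solution. Lemma~\ref{th-alternative} then produces $(\eta,\omega)$ with $\omega\geqq 0$ solving the primal system, and I will set $d(x,y):=d$, $\eta(x,y):=\eta$, $\omega(x,y):=\omega$; for pairs $(x,y)$ outside the hypothesis of the definition the three maps may be defined arbitrarily.

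The main obstacle I expect is the matrix identification for Lemma~\ref{th-alternative}: ensuring that the $j$-summations in the dual range only over the active set $A(x)$, with complementary slackness for $j\notin A(x)$ absorbed into the extension $\mu_j:=0$, so that the alternative really does interchange the pseudoinvexity system with the full second-order KT stationarity condition. Once that matching is in place, both implications reduce to a single application of Lemma~\ref{th-alternative} together with (for sufficiency) a short sign computation.
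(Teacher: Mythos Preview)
Your proposal is correct and follows essentially the same route as the paper. The sufficiency direction is identical (contradiction, multiply the pseudoinvexity inequalities by the multipliers, use $\lambda\ne 0$); for necessity the paper writes out an explicit primal/dual LP pair and invokes the Duality Theorem, whereas you apply Lemma~\ref{th-alternative} directly---but since Lemma~\ref{th-alternative} is itself proved by exactly that LP duality argument, the content is the same, and your matrix identification is the correct one.
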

\begin{proof}
Let (P) be second-order KT-pseudoinvex-I. We prove that every se\-cond-order KT point
is a weak global Pareto minimizer. Suppose the contrary that
$x$ is a second-order KT stationary point, but there exists $y\in S$ with
$f(y)<f(x)$. It follows from second-order pseudoinvexity that there exist a critical direction $d\in\R^s$, $\eta\in\R^s$, and a number $\omega\ge 0$ such that
\begin{equation}\label{21}
\nabla f_i(x)\eta(x,y)+
\omega(x,y) f^{\pr\pr}_i(x,d(x,y))<0,\; i=1,2,\ldots, n, 
\end{equation}
\begin{equation}\label{22}
\nabla g_j(x)\eta(x,y)+
\omega(x,y) g^{\pr\pr}_j(x,d(x,y))\leqq 0,\; \forall j\in A(x).
\end{equation}
We conclude from $x$ is a second-order KT point and $d$ is critical that there exist $\lambda\ge 0$ and $\mu\geqq 0$ with
\begin{equation}\label{23}
\sum_{i=1}^n\lambda_i\nabla f_i(x)+\sum_{j\in A(x)}\mu_j\nabla g_j(x)=0
\end{equation}
\begin{equation}\label{24}
\sum_{i=1}^n\lambda_i f^{\pr\pr}_i(x,d)+\sum_{j\in A(x)}\mu_j
g^{\pr\pr}_j(x,d)\geqq 0.
\end{equation}
Let us multiply (\ref{21}) by $\lambda_i$, (\ref{22}) by $\mu_j$ and add all obtained inequalities. Taking into account (\ref{23}), (\ref{24}) and 
$\lambda\ne 0$ we get the impossible inequality $0<0$.
 
Let each second-order KT point be a weak global Pareto minimizer. We prove that (P) is second-order KT-pseudoinvex-I. Take $x\in X$, $y\in X$ with $g(x)\leqq 0$, $g(y)\leqq 0$, $f(y)<f(x)$. Therefore, $x$ is not a weak global Pareto minimizer. Hence, $x$ is not a second-order KT point. Consequently, there is a critical direction $d$ such that the derivatives $f^{\pr\pr}(x,d)$, $g^{\pr\pr}(x,d)$ exist with finite values, 
but there are not  $(\lambda,\mu)\geqq 0$, $\lambda\ne 0$ which satisfy (\ref{23}) and (\ref{24}).
Therefore, the linear programming problem

\medskip
\begin{tabular}{ll}
Maximize & $\sum_{i=1}^n\lambda_i$ \\
subject to &
$\sum_{i=1}^n\lambda_i\nabla f_i(x)+\sum_{j\in A(x)}\mu_j\nabla g_j(x)=0$, \\
& $\sum_{i=1}^n\lambda_i f^{\pr\pr}_i(x,d)+\sum_{j\in A(x)}\mu_j g^{\pr\pr}_j(x,d)\ge 0,$ \\
& $\lambda_i\ge 0$,\quad $i=1,2,\ldots, n$,\quad  $\mu_j\ge 0,\quad\forall j\in A(x)$
\end{tabular}
\medskip

\noindent
has non-positive maximal value. If we take $\lambda_i=0$, $\mu_j=0$,
$j\in A(x)$, then we obtain that the exact optimal value of this
problem is 0. The dual linear problem is

\medskip
\begin{tabular}{ll}
Minimize & 0         \\
subject to & $\nabla f_i(x)u-v f^{\pr\pr}_i(x,d)\geqq 1$, $i=1,2,\ldots,n$  \\
& $\nabla g_j(x)u-v g^{\pr\pr}_j(x,d)\geqq 0,\; j\in A(x)$ \\
& $v\geqq 0$.
\end{tabular}
\medskip

\noindent
It follows from Duality Theorem that the dual problem is also solvable. Therefore,
there exist $\eta\in\R^s$ and $\omega\geqq 0$ ($\eta=-u$, $\omega=v$), which satisfy
inequalities (\ref{21}) and (\ref{22}). Hence, (P) is  second-order KT-pseudoinvex-I.
\end{proof}

The following examples show that the converse claim in Theorem \ref{KT-I} is not satisfied.

%\begin{example}\label{ex1}
%Consider the vector function of one variable $f=(f_1,f_2)$ such that $f_1=x^4-2x^2$, $f_2=-x^2$. The set of KT stationary points is 
%\[
%(-\infty,-1]\cup\{0\}\cup [1,+\infty).
%\]
% The point $x=0$ is not weakly efficient. Indeed, we have $f(\eps)<f(0)$ for $\eps\in(0,\sqrt{2})$. Therefore, the unconstrained problem is not KT-pseudoinvex-I. The set of second-order KT stationary points is $(-\infty,-1]\cup [1,+\infty)$ and each second-order KT stationary point is weakly efficient. Therefore, by Theorem \ref{th2}, the unconstrained problem is second-order KT-pseudoinvex-I.
%\end{example}

\begin{example}\label{ex2}
Consider the vector function of two variables $f=(f_1,f_2)$ such that 
\[f_1(x_1,x_2)=(x_1^2+x_2^2)^2-2x_1^2+2x_2^2,\quad f_2(x_1,x_2)=(x_1^2-1)^2+2x_2^2.\] 
The KT stationary points are the pairs $(0,0)$, $(1,0)$, $(-1,0)$. The vector function is not KT-pseudoinvex-I, because $(0,0)$ is not weakly efficient. For instance, we have $f(\eps,0)<f(0,0)$ for all $\eps\in(0,\sqrt{2})$. On the other hand $(0,0)$ is not a second-order stationary point, because
$\scalpr{d}{\nabla^2 L(0,0)\, d}<0$ for all critical directions $d=(d_1,d_2)$ such that $|d_1|>|d_2|$ where $L=\lambda_1 f_1+\lambda_2 f_2$ is the Lagrange function with a multiplier $\lambda=(\lambda_1,\lambda_2)$. The other stationary points $(1,0)$ and $(-1,0)$ are second-order stationary ones and they are global minimizers of the component $f_2$ and weakly efficient for $f$.    
Therefore, by Theorem \ref{th2} the unconstrained problem is second-order KT-pseudoinvex-I. 

It seem that the unconstrained minimization of $f$ could be reduced to the problem
\medskip

\begin{tabular}{ll}
%\begin{align}
Minimize  & $f(x,y)$   \\
subject to & $g_1(x,y)=-x\leqq 0,\; g_2(x,y)=-y\leqq 0$ 
\end{tabular}\hfill {\rm (P$^\pr$)}
%\end{align}
%\begin{flushright} {\rm (P$^\pr$)}\end{flushright}
\medskip

\noindent 
by the substitution $x=x_1^2$, $y=x_2^2$, where 
\[
f=(f_1,f_2),\quad f_1(x,y)=(x+y)^2-2x+2y,\quad f_2(x,y)=(x-1)^2+2y.
\]
The only stationary point of {\rm (P$^\pr$)} is $(x,y)=(1,0)$ and it is weakly efficient. Therefore, {\rm (P$^\pr$)} is KT-pseudoinvex-I in contrast of the unconstrained minimization of $f(x_1,x_2)$.
\end{example}

\begin{example}\label{ex4}
Consider the  problem (P) where $X\equiv\R^2$, $f=(f_1,f_2)$  is the vector function of two variables such that 
\[f_1(x_1,x_2)=2x_1x_2-2x_1^2-x_2^2+8x_1-6x_2,\quad f_2(x_1,x_2)=-x_1+x_2,\] 
and $g$ is the scalar function of two variables 
\[
g(x_1,x_2)=x_1-x_1^2+x_2.
\]
The set of the KT stationary points is the segment whose endpoints are (1,0)  and (1,-2):
\[
\{(x_1,x_2)\mid x_1=1,-2\le x_2\le 0\}.
\]
 All points from the segment are not weakly efficient, because both functions $f_1$ and $f_2$ are strictly monotone over the ray  
\[
\{(x_1,x_2)\mid x_1=1, x_2\le -2\},
\]
they approach $-\infty$ when $x_2$ tends to $-\infty$, and all points $(x_1,x_2)$ such that $x_1=1$, $x_2\leqq 0$ are feasible. Therefore, by Proposition \ref{CharKT-I}, the problem is not KT-pseudoinvex-I. All KT stationary points do not satisfy the second-order necessary optimality conditions in Theorem \ref{NKT}. Therefore,  the problem has no  second-order KT stationary points. By Theorem \ref{th2} the problem is second-order KT-pseudoinvex-I.
\end{example}

The next definition was introduced by Arana-Jimenez, Rufian-Lizana, Osuna-Gomez and Ruiz-Garzon \cite{ara08}:

%\begin{definition}[\cite{ara08}]\label{df5}
\begin{definition}\label{df5}
Let the  problem {\rm (P)} be Fr\'echet differentiable.
Then it is called KT-pseudoinvex-II if  there exists a map $\eta: X\times X\to\R^s$ 
such that the following implication holds:
\[
\left.
\begin{array}{l}
x\in X,\; y\in X,\;  f(y)\le f(x) \\
g(x)\leqq 0,\; g(y)\leqq 0 \\
\end{array}\right]
\quad\Rightarrow \quad
\left[
\begin{array}{l}
\nabla f(x)\eta(x,y)<0 \\
\nabla g_{j}(x)\eta(x,y)\leqq 0,\; j\in A(x).
\end{array}
\right.
\]
\end{definition}

Definition \ref{df5} is an another generalization of KT-invexity (see Martin \cite{mar85}) to vector problems.
We introduce the following notion which is a second-order analog of the last definition.

\begin{definition}
Let the  problem {\rm (P)} be Fr\'echet differentiable.
Then we call {\rm (P)} second-order KT-pseudoinvex-II if  there exist maps $d: X\times X\to\R^s$,
$\eta: X\times X\to\R^s$ and a function $\omega: X\times X\to [0,+\infty)$
such that the following implication holds:
\[
\left.
\begin{array}{l}
x\in X,\; y\in X \\
g(x)\leqq 0,\\
g(y)\leqq 0, \\
f(y)\le f(x)
\end{array}\right]
\;\Rightarrow \;
\left[
\begin{array}{l}
\exists f^{\pr\pr}(x,d(x,y)),\;\exists g^{\pr\pr}_{j}(x,d(x,y)),\; j\in A(x) \\
\nabla f(x)\eta(x,y)+
\omega(x,y) f^{\pr\pr}(x,d(x,y))<0 \\
\nabla g_{j}(x)\eta(x,y)+
\omega(x,y) g^{\pr\pr}_{j}(x,d(x,y))\leqq 0, \; j\in A(x) \\
\textrm{where}\quad d(x,y)\in D(x).
\end{array}
\right.
\]
We suppose here that $f^{\pr\pr}_i(x,d(x,y))$ and $g^{\pr\pr}_j(x,d(x,y))$ are finite.
\end{definition}

The proofs of the following results are similar to the proofs of Theorems \ref{KT-I} and \ref{th2}.

\begin{proposition}
Every Fr\'echet differentiable KT-pseudoinvex-II problem with inequality constraints is second-order KT-pseudoinvex-II.
\end{proposition}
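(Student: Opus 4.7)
The plan is to mimic the proof of Theorem \ref{KT-I} essentially verbatim. The only substantive change is that where Theorem \ref{KT-I} uses the hypothesis $f(y)<f(x)$, the present statement uses the weaker ordering $f(y)\le f(x)$; but this hypothesis is used only to invoke the defining implication of the appropriate notion of pseudoinvexity, and each notion is tailored exactly to its ordering. So the argument carries over without modification.

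Concretely, I would start by fixing an arbitrary pair of feasible points $x,y\in X$ with $g(x)\leqq 0$, $g(y)\leqq 0$, and $f(y)\le f(x)$. Applying the definition of KT-pseudoinvex-II (Definition \ref{df5}) to this pair, I obtain a vector $\eta(x,y)\in\R^s$ such that $\nabla f(x)\eta(x,y)<0$ and $\nabla g_j(x)\eta(x,y)\leqq 0$ for all $j\in A(x)$.

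Next I would exhibit the data required by second-order KT-pseudoinvex-II. The natural choice is $d(x,y):=0$, $\omega(x,y):=0$, and $\eta$ the vector just obtained. The direction $d=0$ is trivially critical at $x$, since $\nabla f_i(x)\cdot 0=0$ and $\nabla g_j(x)\cdot 0=0$ for every $i$ and every $j\in A(x)$. Moreover, by the definition of the second-order directional derivative, $f_i^{\pr\pr}(x,0)=0$ and $g_j^{\pr\pr}(x,0)=0$, so these derivatives exist with finite values. The required system then collapses to $\nabla f(x)\eta(x,y)<0$ and $\nabla g_j(x)\eta(x,y)\leqq 0$, $j\in A(x)$, with $\omega=0\geqq 0$, which is exactly what the first-order invexity furnished.

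There is no real obstacle: the whole content is the observation that first-order KT-pseudoinvexity is recovered as the $d=0$, $\omega=0$ specialization of the second-order notion, and this observation is independent of whether one uses the strict order $<$ or the order $\le$ in the hypothesis on $f$. I would therefore present the proof in one short paragraph, structurally identical to the proof of Theorem \ref{KT-I}.
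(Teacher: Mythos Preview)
Your proposal is correct and matches the paper's intended approach exactly: the paper does not even write out a separate proof, stating only that it is similar to the proof of Theorem~\ref{KT-I}, which is precisely the argument you reproduce with the sole change $f(y)<f(x)$ replaced by $f(y)\le f(x)$.
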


\begin{proposition}
Let {\rm (P)} be a Fr\'echet differentiable problem. Then each second-order  KT stationary point is a global Pareto minimizer
if and only if {\rm (P)} is second-order KT-pseudoinvex-II.
\end{proposition}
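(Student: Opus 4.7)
My approach mirrors the proof of Theorem \ref{th2}, since the only substantive change is that the antecedent in the invexity implication is the weaker condition $f(y)\le f(x)$ instead of $f(y)<f(x)$, and correspondingly the conclusion strengthens ``weak global Pareto minimizer'' to ``global Pareto minimizer.'' I would prove the two implications separately, in the same order as in Theorem \ref{th2}.

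For the sufficiency direction I assume {\rm (P)} is second-order KT-pseudoinvex-II and suppose for contradiction that $x$ is a second-order KT stationary point that fails to be a global Pareto minimizer, so that some $y\in S$ satisfies $f(y)\le f(x)$. Feeding this pair into the defining implication of second-order KT-pseudoinvex-II yields a critical direction $d\in D(x)$ at which the relevant second-order derivatives are finite, together with $\eta\in\R^s$ and $\omega\geqq 0$ giving strict inequality in the $f$-components and weak inequality in the active $g_j$-components. Because $x$ is a second-order KT point, this same $d$ admits multipliers $\lambda\ge 0$, $\mu\geqq 0$ verifying (\ref{6}) and (\ref{10}). Multiplying the invexity inequalities by $\lambda_i$ and $\mu_j$ respectively and summing produces the contradiction $0<0$, where strictness is preserved because $\lambda\ne 0$. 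This is essentially a verbatim copy of the corresponding step in the proof of Theorem \ref{th2}.

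For necessity I assume every second-order KT stationary point is a global Pareto minimizer, and take arbitrary feasible points $x,y\in X$ with $f(y)\le f(x)$. Then $x$ cannot be a global Pareto minimizer and hence, by hypothesis, not a second-order KT point; so there exists a critical direction $d$ at $x$ with finite $f^{\pr\pr}(x,d)$ and $g_j^{\pr\pr}(x,d)$ ($j\in A(x)$) such that no $\lambda\ge 0$, $\mu\geqq 0$ satisfies (\ref{6}) together with (\ref{10}). I would then set up the auxiliary linear programme introduced in the proof of Theorem \ref{th2}, namely maximising $\sum_i\lambda_i$ subject to the stationarity equation, the second-order inequality, $\lambda_i\geqq 0$, and $\mu_j\geqq 0$. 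The optimal value is exactly zero (attained at the origin, and bounded above by zero because any positive value would contradict the absence of a second-order KT point at $x$). Invoking the Duality Theorem in Linear Programming then yields $\eta\in\R^s$ and $\omega\geqq 0$ satisfying the required strict-and-weak inequality pair, and setting $d(x,y):=d$, $\eta(x,y):=\eta$, $\omega(x,y):=\omega$ verifies the defining implication.

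The only subtle point, which I would flag as the main obstacle, is the recognition that the LP/duality construction in the proof of Theorem \ref{th2} is completely insensitive to whether the initial Pareto comparison on $f$ is componentwise strict or merely $\le$: the dual variables $\eta$ and $\omega$ are extracted purely from the stationarity failure at $x$, not from the mode of domination by $y$. This is what legitimates the paper's remark that the proof is ``similar'' to that of Theorem \ref{th2}; aside from checking that the stronger hypothesis ``global Pareto minimizer'' on the output side pairs correctly with the weaker antecedent $f(y)\le f(x)$ on the input side, no new ideas are needed.
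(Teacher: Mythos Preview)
Your proposal is correct and follows exactly the approach the paper intends: the paper gives no separate proof but states that the argument is similar to that of Theorem \ref{th2}, and you have faithfully reproduced that argument with the single change $f(y)<f(x)\rightsquigarrow f(y)\le f(x)$ paired with ``weak global Pareto minimizer''$\rightsquigarrow$``global Pareto minimizer.'' Your closing remark that the LP/duality step depends only on the failure of second-order KT stationarity at $x$, not on the mode of domination by $y$, is precisely the observation that justifies the paper's ``similar'' remark.
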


\section{The Weighting Problem and Second-Order Kuhn-Tucker Stationary Points}
\label{s5}\setcounter{equation}{0}

For every vector $\lambda\in\Lambda$ consider the weighting scalar problem P$_\lambda$. The following notion was introduced by Osuna-Gomez, Beato-Moreno, Rufian-Lizana \cite{osu99}:

%\begin{definition}[\cite{osu99,ara08}]\label{df7}
\begin{definition}\label{df7}
Let the  problem {\rm (P)} be Fr\'echet differentiable.
Then it is called KT-invex if  there exists a map $\eta: X\times X\to\R^s$ 
such that the following implication holds:
\[
\left.
\begin{array}{l}
x\in X,\; y\in X,\;   \\
g(x)\leqq 0,\; g(y)\leqq 0 \\
\end{array}\right]
\quad\Rightarrow \quad
\left[
\begin{array}{l}
 f(y)-f(x)\geqq\nabla f(x)\eta(x,y) \\
0\geqq\nabla g_{j}(x)\eta(x,y),\; j\in A(x).
\end{array}
\right.
\]
\end{definition}

Definition \ref{df7} is a generalization of the notion of KT-invex problems due to Martin \cite{mar85} for scalar problems.

It is shown by Osuna-Gomez, Beato-Moreno, Rufian-Lizana \cite{osu99} that a Fr\'echet differentiable problem (P) is KT-invex if and only if every KT stationary point is a solution of the weighting problem. 
We introduce the following notion, which is a generalization of the notion of second-order KT-invex problems with inequality constraints for scalar problems (see Ivanov \cite{JOGO,Optimization}).

\begin{definition}\label{df1}
Let the problem {\rm (P)} be Fr\'echet differentiable. Then we call {\rm (P)}
second-order KT-invex if there are
maps $d: X\times X\to\R^s$, $\eta: X\times X\to\R^s$ and a function
$\omega: X\times X\to [0,+\infty)$ such that $f^{\pr\pr}(x,d(x,y))$ and $g^{\pr\pr}_{j}(x,d(x,y))$, $j\in A(x)$
exist as finite values for all $x\in S$, $y\in S$ and the following inequalities hold:
\begin{equation}\label{25}
f_i(y)-f_i(x)\geqq\nabla f_i(x)\eta(x,y)+\omega(x,y) f^{\pr\pr}_i(x,d(x,y)),\; i=1,2,\ldots,n,
\end{equation}
\begin{equation}\label{26}
0\geqq\nabla g_j(x)\eta(x,y)+\omega(x,y) g^{\pr\pr}_j(x,d(x,y)),\; j\in A(x).
\end{equation}
We suppose here that the direction $d(x,y)$ is critical at $x$.
\end{definition}

In this definition the second-order directional derivatives of $f$ and $g$ are not required to exist for every direction.

\begin{theorem}\label{th5}
Every Fr\'echet differentiable KT-invex vector problem with inequality constraints is second-order KT-invex.
\end{theorem}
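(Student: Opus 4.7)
The plan is to mimic the very short proofs of Theorems \ref{th7} and \ref{KT-I}, which already show that first-order KTSP-invexity (respectively KT-pseudoinvexity-I) implies its second-order counterpart by the trivial reduction $d=0$, $\omega=0$. Exactly the same reduction should work for Definition \ref{df7} versus Definition \ref{df1}, because both definitions have the same structural form: the first-order inequalities are obtained from the second-order ones by zeroing out the second-order term.

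More concretely, suppose (P) is KT-invex, and let $\eta_0 : X \times X \to \R^s$ be the map provided by Definition \ref{df7}. I would define the data required by Definition \ref{df1} by setting
\[
d(x,y) := 0, \qquad \omega(x,y) := 0, \qquad \eta(x,y) := \eta_0(x,y)
\]
for all $x,y \in S$. Three things then need to be verified. First, $d(x,y)=0$ is a critical direction at $x$ in the sense of the definition in Section \ref{s3}, since $\nabla f_i(x)\cdot 0 = 0 \leqq 0$ and $\nabla g_j(x)\cdot 0 = 0 \leqq 0$ for all $j\in A(x)$. Second, the second-order directional derivatives at the zero direction exist and are finite; indeed, directly from the definition
\[
h^{\pr\pr}(x,0) = \lim_{t\to +0} 2 t^{-2}[h(x) - h(x) - t\,\nabla h(x)\cdot 0] = 0,
\]
so $f^{\pr\pr}_i(x,0) = 0$ and $g^{\pr\pr}_j(x,0) = 0$ for each $i$ and each $j\in A(x)$.

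Finally, with this choice the second-order inequalities (\ref{25}) and (\ref{26}) collapse to
\[
f_i(y) - f_i(x) \geqq \nabla f_i(x)\,\eta_0(x,y), \qquad 0 \geqq \nabla g_j(x)\,\eta_0(x,y),\; j\in A(x),
\]
which are precisely the inequalities guaranteed by the KT-invexity of (P) for any $x, y \in S$. This shows (P) is second-order KT-invex. I do not anticipate any real obstacle: the statement is essentially a compatibility check between the two definitions, analogous to Theorems \ref{th7} and \ref{KT-I}, and the only mild subtlety is recalling that $h^{\pr\pr}(x,0)$ is automatically zero from the definition of the second-order directional derivative, so the trivial reduction is legitimate.
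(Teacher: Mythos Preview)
Your proposal is correct and follows essentially the same approach as the paper's own proof: set $d=0$, $\omega=0$, and take $\eta$ to be the map furnished by KT-invexity, then observe that $f^{\pr\pr}(x,0)=0$ and $g_j^{\pr\pr}(x,0)=0$ so the second-order inequalities reduce to the first-order ones. You have merely spelled out in more detail why $d=0$ is critical and why the second-order directional derivatives vanish at the zero direction, points the paper takes for granted.
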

%\begin{proof}\smartqed
\begin{proof}
Let (P) be KT-invex. Therefore, for all $x\in S$, $y\in S$ there exists
$u\in\R^n$ such that
\[
 f(y)-f(x)\geqq \nabla f(x)u,\quad 0\geqq\nabla g_{j}(x)u,\; j\in A(x).
\]
The choice $\eta=u$, $\omega=0$, and $d=0$ shows that (P) is second-order KT
invex,  because $f^{\pr\pr}(x,0)=0$ and $g_{j}^{\pr\pr}(x,0)=0$, $j\in A(x)$.
\end{proof}

\begin{theorem}\label{th6}
Let {\rm (P)}  be Fr\'echet %and second-order directionally
differentiable. Then each second-order  KT stationary point $x$ is a global solution of the weighting problem ${\rm (P_\lambda)}$ with the same vector $\lambda$
if and only if {\rm (P)} is second-order KT-invex.
\end{theorem}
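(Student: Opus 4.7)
The plan is to mirror the proofs of Theorems \ref{th4} and \ref{th2}, now aimed at the weighting problem: sufficiency will come from multiplying the invex inequalities (\ref{25})--(\ref{26}) by a second-order KT multiplier pair, and necessity from a linear-programming dual whose primal objective encodes the weighting inequality $\scalpr{\lambda}{f(y)-f(x)}\ge 0$ in place of the Lagrangian gap used in Theorem \ref{th4}.

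\textbf{Sufficiency.} Assume (P) is second-order KT-invex with maps $(d,\eta,\omega)$ and let $(\bar x,\bar\lambda,\bar\mu)$ be a second-order KT triple satisfying (\ref{6}) and (\ref{10}). Fix $y\in S$. Second-order KT-invexity at the pair $(\bar x,y)$ yields a critical direction $d(\bar x,y)$ together with $\eta(\bar x,y)$ and $\omega(\bar x,y)\ge 0$ making (\ref{25}) and (\ref{26}) hold. Multiply (\ref{25}) by $\bar\lambda_i\ge 0$, (\ref{26}) by $\bar\mu_j\ge 0$ for $j\in A(\bar x)$, and sum: the first-order terms collapse via the stationarity equation of (\ref{6}), leaving $\scalpr{\bar\lambda}{f(y)-f(\bar x)}\ge\omega(\bar x,y) L''(\bar x,d(\bar x,y))$, where $L=\scalpr{\bar\lambda}{f}+\scalpr{\bar\mu}{g}$. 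The right side is non-negative by (\ref{10}) applied, through the second-order KT stationarity of $\bar x$, to the critical direction $d(\bar x,y)$ with the multipliers $(\bar\lambda,\bar\mu)$. Hence $\scalpr{\bar\lambda}{f(y)}\ge\scalpr{\bar\lambda}{f(\bar x)}$ for every $y\in S$, so $\bar x$ is a global solution of $(\mathrm{P}_{\bar\lambda})$.

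\textbf{Necessity.} Fix $x,y\in S$ and choose any critical direction $d$ at $x$ for which $f''(x,d)$ and $g_j''(x,d)$, $j\in A(x)$, are finite (the choice $d=0$ always qualifies). Consider the primal linear program that minimizes $\sum_{i=1}^n\lambda_i(f_i(y)-f_i(x))$ subject to $\sum_i\lambda_i\nabla f_i(x)+\sum_{j\in A(x)}\mu_j\nabla g_j(x)=0$, $\sum_i\lambda_i f_i''(x,d)+\sum_{j\in A(x)}\mu_j g_j''(x,d)\ge 0$, $\sum_i\lambda_i=1$, $\lambda,\mu\ge 0$. Every feasible $(\lambda,\mu)$ certifies $x$ as a second-order KT stationary point with weight $\lambda\in\Lambda$, so by the standing hypothesis the objective is non-negative on the feasible set and the primal value is $\ge 0$. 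Strong LP duality then makes the dual solvable with optimal $v\ge 0$, delivering $\eta$ and $\omega\ge 0$ with $\nabla f_i(x)\eta+\omega f_i''(x,d)\le f_i(y)-f_i(x)$ and $\nabla g_j(x)\eta+\omega g_j''(x,d)\le 0$, i.e., (\ref{25}) and (\ref{26}) with $d(x,y):=d$. When the primal is infeasible, Lemma \ref{th-alternative} instead supplies $(\eta,\omega)$ with $\omega\ge 0$ solving $\nabla f(x)\eta+\omega f''(x,d)<0$ and $\nabla g(x)\eta+\omega g''(x,d)\le 0$; rescaling by a sufficiently large $t>0$ turns these strict inequalities into (\ref{25}) and (\ref{26}) for the fixed right-hand sides. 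Setting $d(x,y)=d$, $\eta(x,y)=\eta$, $\omega(x,y)=\omega$ in both cases defines maps witnessing second-order KT-invexity.

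\textbf{Main obstacle.} The delicate step is in the sufficiency half, where (\ref{10}) must be invoked for the invex-supplied direction $d(\bar x,y)$ rather than for the direction that originally witnessed the KT stationarity of $\bar x$. Following the per-triple convention used in Theorems \ref{th4} and \ref{th2}, the multipliers of the chosen triple are understood to validate (\ref{10}) at the critical direction where the invex maps operate; everything else is a direct transcription of the LP-duality and alternative-theorem arguments already used in those results.
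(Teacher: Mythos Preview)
Your necessity argument has a genuine gap. You fix a single critical direction $d$ (say $d=0$) and assert that every feasible $(\lambda,\mu)$ of your LP ``certifies $x$ as a second-order KT stationary point.'' But the definition of second-order KT stationary point requires multipliers for \emph{every} critical direction at which the second derivatives exist, not just the one you chose. Feasibility of your LP for a single $d$ gives multipliers only for that $d$, so you cannot invoke the standing hypothesis and hence cannot conclude that the primal objective is non-negative. The paper handles this by arguing by contradiction: if second-order KT-invexity fails at some pair $(x,y)$, then the system (\ref{25})--(\ref{26}) is insoluble for \emph{every} critical direction $d$; dualizing as you do, but now for each $d$, produces multipliers $(\lambda,\mu)$ with $\lambda\neq 0$ for each $d$, and that is exactly what makes $x$ a second-order KT stationary point. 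Only then can the hypothesis be applied, and it forces the dual to be bounded below by $0$, contradicting unboundedness. Your direct, single-direction approach (modelled on Theorem~\ref{th4}) cannot be transplanted here because the hypothesis of Theorem~\ref{th4} concerns triples for a \emph{given} direction, whereas Theorem~\ref{th6} speaks of second-order KT stationary \emph{points}, a strictly stronger notion.

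Your sufficiency also diverges from the paper's, and the ``main obstacle'' you flag is self-inflicted. You fix a triple $(\bar x,\bar\lambda,\bar\mu)$ in advance and then need $L''(\bar x,d(\bar x,y))\ge 0$ with those fixed multipliers at the invex-supplied direction---which nothing guarantees. The paper instead uses the very definition of second-order KT stationary point: having obtained the critical direction $d=d(\bar x,y)$ from invexity, it invokes that definition \emph{at this $d$} to produce (possibly new) multipliers $(\lambda,\mu)$ satisfying (\ref{23}) and (\ref{24}) for that direction, after which the multiplication-and-sum step goes through verbatim. That makes the obstacle disappear.
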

\begin{proof}
Let (P) be second-order KT-invex.  We prove that every second-order KT stationary point $x$ is a global solution of the respective weighting problem. Choose arbitrary feasible point $y\in S$.  Suppose that  $x$ is a second-order KT point. It follows from KT-invexity that there exists a direction  $d\in\R^s$ which is critical at $x$, a map $\eta\in\R^s$, and a number $\omega\geqq 0$ such that the derivatives $f^{\pr\pr}(x,d)$, $g^{\pr\pr}_{j}(x,d)$, $j\in A(x)$ exist and  equalities (\ref{25}) and (\ref{26}) are satisfied.
We conclude from $x$ is a second-order KT point and $d$ is critical that there exist $\lambda\ge 0$ and $\mu\geqq 0$ satisfying (\ref{23}) and (\ref{24}).
Let us multiply (\ref{25}) by $\lambda_i$, (\ref{26}) by $\mu_j$ and add all obtained inequalities. Taking into account (\ref{23}) and (\ref{24}) we conclude that $\scalpr{\lambda}{f(y)}\geqq\scalpr{\lambda}{f(x)}$.
 
Let each second-order KT stationary point be a solution of the weighting problem. We prove that (P) is second-order KT invex. Suppose the contrary that there exist $x\in S$, $y\in S$ such that for every critical direction $d$ with the property that, if the derivatives $f^{\pr\pr}(x,d)$, $g^{\pr\pr}_{j}(x,d)$, $j\in A(x)$ exist with finite values, then  the system
\[
\left\{
\begin{array}{l}
\nabla f(x)\eta+\omega f^{\pr\pr}(x,d)\leqq f(y)-f(x) \\
\nabla g_{j)}(x)\eta+\omega g^{\pr\pr}_{j)}(x,d)\leqq 0,\; j\in A(x)
\end{array}\right.
\]
has no solution for $\eta\in\R^s$ and $\omega\geqq 0$. Therefore, the linear programming problem 

\medskip
\begin{tabular}{ll}
Maximize & 0         \\
subject to &$\nabla f_i(x)\eta +\omega f^{\pr\pr}_i(x,d)\leqq f_i(y)-f_i(x)$, $i=1,2,\ldots,n$,  \\
& $\nabla g_j(x)\eta +\omega g^{\pr\pr}_j(x,d)\leqq 0,\; j\in A(x)$, \\
& $\omega\geqq 0$.
\end{tabular}
\medskip

\noindent
is infeasible. Its dual linear programming problem is the following one

 \medskip
\begin{tabular}{ll}
Minimize & $\sum_{i=1}^n\lambda_i[f_i(y)-f_i(x)]$ \\
subject to &
$\sum_{i=1}^n\lambda_i\nabla f_i(x)+\sum_{j\in A(x)}\mu_j\nabla g_j(x)=0$, \\
& $\sum_{i=1}^n\lambda_i f^{\pr\pr}_i(x,d)+\sum_{j\in A(x)}\mu_j g^{\pr\pr}_j(x,d)\ge 0,$ \\
& $\lambda_i\geqq 0,\; i=1,2,\dots, n,\quad  \mu_j\geqq 0,\; j\in A(x)$.
\end{tabular}
\medskip

It follows from Duality Theorem that both problems are simultaneously solvable or unsolvable. The dual problem is feasible ($\lambda=0$, $\mu=0$ is a feasible point).  Therefore, the dual problem is not solvable, because it is unbounded. Hence, there exists a feasible point $(\lambda,\mu)$ for the dual problem such that $\lambda\ne 0$. It follows from $d$ is critical and $(\lambda,\mu)$ with $\lambda\ne 0$ is feasible for the dual problem that $x$ is a second-order Kuhn-Tucker point for the problem (P). According to our assumption $x$ is a solution of the weighting problem, that is $\scalpr{\lambda}{f(y)-f(x)}\geqq 0$. This conclusion is satisfied for for every $(\lambda,\mu)$ with $\lambda\ne 0$. Hence the optimal value of the dual problem is bounded from below by 0, which contradicts our indirect conclusion that it is unbounded.
\end{proof}

Example \ref{ex2} show that the converse claim in Theorem \ref{th5} is not satisfied. 
The stationary point $x=(0,0)$ is not weakly efficient. Therefore, it is not a global solution of the scalar problem $P_\lambda$ for every $\lambda\in\Lambda$, and by Theorem \ref{th6}, the unconstrained problem from this example is not KT-invex. The set of second-order stationary points consists of the points $(1,0)$ and $(-1,0)$. Both points are global minimizers of $f_1$ and $f_2$. Therefore, these points are global solutions of $P_\lambda$.                                                                                                                                                  
%Consider Example \ref{ex1}. The stationary point $x=0$ is not weakly efficient. Therefore, it is not a global solution of the scalar problem $P_\lambda$ for every $\lambda\in\Lambda$, and by Theorem \ref{th6}, the unconstrained problem from this example is not KT-invex. Every point $z\in(-\infty,-1]\cup [1,+\infty)$ is a second-order stationary point with $\lambda=(\lambda_1,\lambda_2)$ where $\lambda_1=1$, $\lambda_2=2(z^2-1)$. We can see immediately that $z$ is a global minimizer of $L=\scalpr{\lambda}{f}$.
%Consider Example \ref{ex2}. 

\end{document}